\newcommand{\cal}{\mathcal}
\newcommand{\lb}{\left\{}
\newcommand{\rb}{\right\}}
\newcommand{\la}{\left<}
\newcommand{\ra}{\right>}
\newcommand{\ps}{\mathbb{P}}
\newcommand{\Q}{\mathbb{Q}}
\newcommand{\ka}{\kappa}
\newcommand{\si}{\sigma}
\newcommand{\vp}{\varphi}
\newcommand{\om}{\omega}
\newcommand{\res}{\upharpoonright}
\newcommand{\seq}{\subseteq}
\newcommand{\we}{\wedge}
\newcommand{\Add}{\operatorname{Add}}
\newcommand{\dom}{\operatorname{dom}}
\newcommand{\cl}{\operatorname{cl}}
\renewcommand{\res}{\operatorname{res}}
\newcommand{\es}{\emptyset}
\newcommand{\U}{\mathcal{U}}
\theoremstyle{definition}
\newtheorem{definition}{Definition}[section]
\newtheorem{example}[definition]{Example}
\newtheorem{question}[definition]{Question}
\theoremstyle{plain}
\newtheorem{theorem}[definition]{Theorem}
\newtheorem{lemma}[definition]{Lemma}
\newtheorem{corollary}[definition]{Corollary}
\newtheorem{remark}[definition]{Remark}
\keywords{Compactness, Menger, Rothberger, Countably Tight, Strongly Proper, Preservation of Topological Properties}
\subjclass[2010]{Primary: 03E05, 54D20, 91A44}
\begin{document}

\title{Preservation of Topological Properties by Strongly Proper Forcings}

\author{Thomas Gilton}\address[Gilton]{University of Pittsburgh
Department of Mathematics. The Dietrich School of 
Arts and Sciences, 301 Thackeray Hall,
Pittsburgh, PA, 15260 United States
} \email{tdg25@pitt.edu} \urladdr{https://sites.pitt.edu/~tdg25/}

\author{Jared Holshouser}\address[Holshouser]{Norwich University
Department of Mathematics. College of Arts and Sciences, 158 Harmon Dr, Northfield, VT 05663 United States
} \email{jholshou@norwich.edu} \urladdr{https://jaredholshouser.github.io/}

\date{\today}

\begin{abstract}
    In this paper we show that forcings which are strongly proper for stationarily many countable elementary submodels preserve each of the following properties of topological spaces: countably tight; Lindel{\"o}f; Rothberger; Menger; and a strategic version of Rothberger. This extends results from Dow's papers \cite{DowSubmodels} and \cite{DowTwoApplications}, as well as results of Iwasa (\cite{Iwasa}) and Kada (\cite{Kada}). 
\end{abstract}

\maketitle

\tableofcontents

\section{Introduction}

Our motivation for this paper is to understand classes of forcings that preserve topological properties of interest, such as countable tightness and the Lindel{\"o}f property. We first make some remarks about narrower classes of posets, and then we take a step back to motivate the investigation of broader classes of posets. It is known that, in general, countably closed posets needn't preserve these properties (\cite{ScheepersCountableTightness}). However, Dow has shown that posets of the form $\Add(\om,\om_1)\ast\dot{\Q}$, where $\Add(\om,\om_1)$ is the poset to add $\om_1$-many Cohen reals and $\dot{\Q}$ is a name for a countably closed poset, do in fact preserve countable tightness (Lemma 5.6 of \cite{DowSubmodels}) and do preserve the Lindel{\"o}f property (Lemma 3.3 of \cite{DowTwoApplications}).

Taking a step back, one particularly useful class of posets is the class of Proper posets, invented by Shelah (\cite{Shelah1},\cite{Shelah2}). The class of proper forcings subsumes both the class of countably closed and the class of ccc posets. Moreover, this class is iterable with countable supports, as Shelah proved, thereby generalizing the celebrated result of Solovay and Tennenbaum (\cite{SolovayTennenbaum}). Since countably closed forcings are proper, and since countably closed forcings do not in general preserve (for example) countable tightness and the Lindel{\"o}f property, we can't hope to prove that proper forcings in general preserve them. We remark that there are known results about the preservation of strengthenings of the Lindel{\"o}f property (\cite{Kada2}) by posets satisfying natural game-theoretic properties.

However, posets of the form $\Add(\om,\om_1)\ast\dot{\Q}$ (where $\dot{\Q}$ is forced to be countably closed), which do preserve these properties, are not just proper but satisfy the more stringent condition of being strongly proper (see Lemma 1.12 of \cite{GiltonNeeman} for a proof). Mitchell first explicitly isolated the idea of a strongly proper forcing (\cite{Mitchell_Ouch}). This class contains Cohen forcing to add any number of subsets of $\omega$; however no countably closed forcing (nor countably distributive forcing, for that matter) is strongly proper, since every (non-trivial) strongly proper forcing adds a Cohen real.

In light of these observations, we study preservation properties of topological spaces under strongly proper forcings, showing that this class is powerful enough to preserve countable tightness, the Lindel{\"o}f property, and many other covering properties. These generalize results from Dow, mentioned above, as well as results from Iwasa (\cite{Iwasa}) and Kada (\cite{Kada}), among others. We also note that Marun proved in his thesis, earlier and independently of us, that forcings which are strongly proper for stationarily many models preserve the Lindel{\"o}f property (\cite{Marun}).

We note, though, that we haven't exactly ``carved at the joints" here: there are non-strongly proper posets which do preserve many of these properties. For example, the forcing to add a random real is not strongly proper: this forcing does not add any unbounded reals, and hence does not add any Cohen real (as Cohen reals are unbounded). Nevertheless, the random real forcing does preserve many of the properties that we study here (see \cite{Kada}).

A remark on notation: ours is standard. One item that requires remark concerns names for ground model objects. When $x$ is a specific set in the ground model $V$ and when we are referring to $x$ in statements in the forcing language, we will either leave the symbol ``$x$" as is, or we will put a stylish hat on it, for example ``$\check{x}$." The use of the hat is to emphasize that this is a canonical name for the set $x$ in contexts in which some disambiguation could be helpful.

Finally, a remark on exposition: given that the material in this paper concerns set theory and topology, we have added expository background material about both set-theoretic as well as topological topics. The hope is that a reader who is more rooted in one of those fields, but who is interested in both, can profit from the material in the paper without having to consult too many other sources for background.

\section{Topological Definitions}

In this section, we present the relevant topological definitions, particularly for the reader who is more familiar with the set-theoretic side of things.

Some of the main themes of topology are compactness (as witnessed by open covers); convergence and clustering; and density. The study of selection principles takes these topics and expands on them with a particular focus on diagonalization processes. This section of preliminary definitions is broken up by these three themes and builds from standard definitions up to the selection principle versions.

First we discuss the variations on compactness, known as covering properties.
\begin{definition}
    Let $(X,\tau)$ be a topological space.
    \begin{itemize}
        \item $X$ is \textbf{compact} if for every open cover $\mathcal U$ of $X$, there is a finite subset $\mathcal F \subseteq \mathcal U$ so that $X = \bigcup \mathcal F$.
        \item $X$ is \textbf{Lindel\"{o}f} if for every open cover $\mathcal U$ of $X$, there is a countable subset $\mathcal V \subseteq \mathcal U$ so that $X = \bigcup \mathcal V$.
        \item $X$ is \textbf{Rothberger} if for every sequence $\langle \mathcal U_n : n \in \omega \rangle$ of open covers of $X$, there are open sets $U_n \in \mathcal U_n$ so that $X = \bigcup_n U_n$.
        \item $X$ is \textbf{Menger} if for every sequence $\langle \mathcal U_n : n \in \omega \rangle$ of open covers of $X$, there are finite subsets $\mathcal F_n \subset \mathcal U_n$ so that $X = \bigcup_n \bigcup \mathcal F_n$.
    \end{itemize}
\end{definition}

Conveniently, the notion of Rothberger is slightly stronger than it looks:

\begin{lemma}\label{lemma:StrongBerger}
    A space $(X,\tau)$ is Rothberger if and only if for every sequence $\langle \mathcal U_n : n \in \omega \rangle$ of open covers of $X$, there are open sets $U_n \in \mathcal U_n$ so that
    for every $x \in X$, there are infinitely many $n \in \omega$ so that $x \in U_n$.
\end{lemma}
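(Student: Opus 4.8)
The plan is to prove the nontrivial direction (Rothberger implies the ``infinitely often'' version) and to dispatch the converse immediately. For the converse, note that if we are given open sets $U_n \in \mathcal U_n$ such that every $x \in X$ lies in $U_n$ for infinitely many $n$, then in particular every $x$ lies in some $U_n$, so $X = \bigcup_n U_n$; hence the displayed condition trivially implies the ordinary Rothberger property. So the entire content is the forward implication, and I would state the converse in a single sentence.

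For the forward direction, the key idea is a partition-and-recombine argument. First I would fix a partition of $\omega$ into infinitely many pairwise disjoint infinite sets $\langle A_k : k \in \omega\rangle$, with $\bigcup_k A_k = \omega$; such a partition exists by a routine bijection $\omega \cong \omega \times \omega$. Now suppose $\langle \mathcal U_n : n \in \omega\rangle$ is a sequence of open covers. For each fixed $k$, the subfamily $\langle \mathcal U_n : n \in A_k\rangle$ is again a sequence of open covers indexed by the infinite set $A_k$, so applying the Rothberger property to it (after reindexing $A_k$ by $\omega$) yields a choice of open sets $U_n \in \mathcal U_n$ for $n \in A_k$ with $X = \bigcup_{n \in A_k} U_n$.

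Since the $A_k$ partition $\omega$, doing this for every $k$ defines $U_n \in \mathcal U_n$ for every $n \in \omega$, with no conflicts. To verify the ``infinitely often'' conclusion, I would fix $x \in X$ and observe that for each $k$, because $X = \bigcup_{n \in A_k} U_n$, there is at least one index $n_k \in A_k$ with $x \in U_{n_k}$. As the sets $A_k$ are pairwise disjoint, the indices $n_k$ are pairwise distinct, so $x$ belongs to $U_n$ for infinitely many $n$, as required.

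I do not anticipate a serious obstacle here; the argument is essentially bookkeeping. The only point demanding mild care is the reindexing step: one must apply Rothberger along each $A_k$ using a bijection with $\omega$ and then transport the chosen sets back to their original indices, checking that each $U_n$ is selected from its own cover $\mathcal U_n$ and that no index receives two assignments. The disjointness of the partition is exactly what guarantees both well-definedness of the global selection and the final distinctness of the witnessing indices.
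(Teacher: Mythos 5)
Your proof is correct. The paper in fact states Lemma~\ref{lemma:StrongBerger} without proof (it is a folklore fact in the selection-principles literature), and your partition-and-recombine argument --- splitting $\omega$ into infinitely many disjoint infinite blocks $A_k$, applying the Rothberger property along each block, and noting that each point is caught at least once per block --- is exactly the standard argument that fills this gap. The one point needing care, the reindexing of each $\langle \mathcal U_n : n \in A_k\rangle$ by a bijection with $\omega$ and the well-definedness of the global selection, is handled correctly by the disjointness of the blocks, and your one-sentence dispatch of the converse direction is also right.
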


Likewise, the Menger property is stronger than it initially appears:

\begin{lemma}\label{lemma:StrongMenger}
     A space $(X,\tau)$ is Menger if and only if for every sequence $\langle \mathcal U_n : n \in \omega \rangle$ of open covers of $X$, there exists a sequence $\la\cal{F}_n:n\in\om\ra$ so that 
     \begin{itemize}
         \item each $\cal{F}_n$ is a finite subset of $\cal{U}_n$; and
         \item for every $x \in X$, there are infinitely many $n \in \omega$ so that for some $W\in\cal{F}_n$, $x \in W$.
     \end{itemize}
\end{lemma}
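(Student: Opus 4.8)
The plan is to prove the two directions separately, with essentially all of the content lying in the forward (only if) implication; the reverse direction is a one-line observation. For the \emph{if} direction, suppose that for every sequence of open covers we can extract finite subfamilies $\la\cal{F}_n:n\in\om\ra$ witnessing the stated ``infinitely often'' condition. Then in particular every $x\in X$ lies in some $W\in\cal{F}_n$ for at least one $n$, so $X=\bigcup_n\bigcup\cal{F}_n$, which is exactly the Menger condition. Thus the strengthened property trivially implies Mengerness.

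For the \emph{only if} direction, assume $X$ is Menger and fix a sequence $\la\cal{U}_n:n\in\om\ra$ of open covers. The key device is to fix a partition $\la A_k:k\in\om\ra$ of $\om$ into infinitely many infinite pieces. For each fixed $k$, the subfamily $\la\cal{U}_n:n\in A_k\ra$, after reindexing $A_k$ in increasing order as an $\om$-sequence, is again a sequence of open covers of $X$; so applying the (plain) Menger property to it yields finite sets $\cal{F}_n\seq\cal{U}_n$ for $n\in A_k$ with $X=\bigcup_{n\in A_k}\bigcup\cal{F}_n$. Carrying this out independently for every $k$ and using that the $A_k$ partition $\om$, we obtain a single sequence $\la\cal{F}_n:n\in\om\ra$ with each $\cal{F}_n$ a finite subset of $\cal{U}_n$.

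It then remains to verify the infinite-cover clause. I would fix $x\in X$ and an arbitrary $k\in\om$: since $X=\bigcup_{n\in A_k}\bigcup\cal{F}_n$, there is some index $n_k\in A_k$ and some $W\in\cal{F}_{n_k}$ with $x\in W$. Because the blocks $A_k$ are pairwise disjoint, the indices $n_k$ obtained for distinct $k$ are themselves distinct, so $\{n_k:k\in\om\}$ furnishes infinitely many $n$ for which $x$ is covered by some member of $\cal{F}_n$. This is precisely the strengthened conclusion.

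I do not expect a genuine obstacle here: the entire argument is the standard ``partition into infinitely many infinite blocks'' trick, which upgrades a cover-at-least-once guarantee into a cover-infinitely-often guarantee by running the one-time Menger selection on each block separately. The only point demanding a little care is the bookkeeping, namely reindexing each infinite block $A_k$ as an $\om$-sequence before invoking the Menger property, and then observing that the resulting partial selections glue into one global sequence since the blocks exhaust $\om$. This is the exact analogue of the mechanism proving the Rothberger strengthening in Lemma \ref{lemma:StrongBerger}, the difference being only that there one selects a single open set per index rather than a finite subfamily.
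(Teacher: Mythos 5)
Your proof is correct. The paper itself states Lemma \ref{lemma:StrongMenger} without proof (as it does Lemma \ref{lemma:StrongBerger}), and your argument --- partitioning $\om$ into infinitely many infinite blocks, running the plain Menger selection on each block, and gluing --- is exactly the standard argument that the paper is tacitly relying on, with both directions handled correctly.
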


There are variations of these covering properties that naturally arise in $C_p$-theory, the study of the space of continuous real-valued functions on $X$.

\begin{definition}
    Let $(X,\tau)$ be a topological space.
    \begin{itemize}
        \item $\mathcal U$ is an $\omega$-\textbf{cover} of $X$ if whenever $F \subseteq X$ is finite, there is a $U \in \mathcal U$ so that $F \subseteq U$.
        \item $X$ is $\omega$-\textbf{Lindel\"{o}f} if for every $\omega$-cover $\mathcal U$ of $X$, there is a countable subset $\mathcal V \subseteq \mathcal U$ so that $\mathcal V$ is an $\omega$-cover of $X$.
        \item $X$ is $\omega$-\textbf{Rothberger} if for every sequence $\langle \mathcal U_n : n \in \omega \rangle$ of $\omega$-covers  of $X$, there are open sets $U_n \in \mathcal U_n$ so that $\{U_n : n \in \omega\}$ is an $\omega$-cover of $X$.
        \item $X$ is $\omega$-\textbf{Menger} if for every sequence $\langle \mathcal U_n : n \in \omega \rangle$ of $\omega$-covers of $X$, there are finite subsets $\mathcal F_n \subset \mathcal U_n$ so that $\{\bigcup \mathcal F_n : n \in \omega\}$ is an $\omega$-cover of $X$.
    \end{itemize}
\end{definition}

\begin{remark}\label{remark:omegaversion}
    It is worth noting that for the properties listed above, a space $X$ satisfies the ``$\omega$-version" iff each of its finite powers satisfies the ``ordinary version." More explicitly, $X$ is $\omega$-Lindel\"{o}f if and only if $X^n$ is Lindel\"{o}f for all $n$ \cite{GerlitzNagy}, $X$ is $\omega$-Menger if and only if $X^n$ is Menger for all $n$ \cite{JustMillerScheepers}, and $X$ is $\omega$-Rothberger if and only if $X^n$ is Rothberger for all $n$ \cite{Sakai}.
\end{remark}

For the sake of the reader who may not be familar with these definitions, we now take a moment to show how to separate many of them. Note first that Rothberger implies Menger which in turn implies Lindel\"{o}f and also that compact implies Lindel\"{o}f. Similarly, $\omega$-Rothberger implies $\omega$-Menger which in turn implies $\omega$-Lindel\"{o}f. Finally, the ``$\omega$ version" of each property implies the ordinary version. To separate the properties, we have the following counterexamples.
\begin{itemize}
    \item The Sorgenfrey line is Lindel\"{o}f, but it is not Menger \cite{Scheepers2003}. Neither is it $\om$-Lindel{\"o}f.
    \item The real line with the standard topology is Menger, and in fact, $\om$-Menger. But is not Rothberger (and hence not $\om$-Rothberger). The fact that $\mathbb R$ is Menger comes from writing $\mathbb R = \bigcup_{n \in \omega} [-n,n]$. The fact that it is not Rothberger comes from the sequence of covers given by
    \[
    \mathcal U_n = \{(a - 2^{-n},a + 2^{-n} : a \in \mathbb R\}
    \]
    for each $n$.
    \item Any countable space is Rothberger. Uncountable examples include the Fortissimo space of an uncountable discrete space: Let $X$ be uncountable and discrete and $x^* \notin X$. Topologize $X \cup\{x^*\}$ by letting $U \subseteq X \cup\{x^*\}$ be open if it does not contain $x^*$ or if its complement is countable. See the entry for this space at the Pi-Base [\href{https://topology.pi-base.org/spaces/S000022}{S22}] for more details. Luzin subsets of the real line are also Rothberger \cite{Rothberger}.
    \item The irrational numbers are $\omega$-Lindel\"{o}f, but they are not Menger (and perforce not $\omega$-Menger). For a reference, see the entry at the Pi-Base [\href{https://topology.pi-base.org/spaces/S000028}{S28}].
    \item It is hard to find examples of spaces that are Menger but not $\omega$-Menger. In fact, in \cite{Zdomskyy}, it is shown that it is consistent with ZFC that for all metrizable spaces, the Menger and $\omega$-Menger properties are equivalent.
    \item Under CH, there are Luzin subsets of the real line that are Rothberger but not $\omega$-Rothberger. On the other hand, in the Laver model, every Rothberger subset of the real line is countable, and thus $\omega$-Rothberger \cite{Zdomskyy2}\cite{Laver}\cite{Rothberger2}.
\end{itemize}

Now we turn to reviewing convergence and clustering properties, the study of which helps to address the question ``How do we characterize the closure of a subset?" We will see below that there are ways of approaching this both through clustering and through convergence. 
\begin{definition}
    Let $(X,\tau)$ be a topological space. For $A \subseteq X$, we will use $\mbox{cl}(A)$ to denote the closure of $A$.
    \begin{itemize}
        \item $X$ has \textbf{countable tightness at $x$} if for every $A \subseteq X$ with $x \in \mbox{cl}(A)$, there is a countable set $B \subseteq A$ so that $x \in \mbox{cl}(B)$. $X$ has \textbf{countable tightness} if it has countable tightness at $x$ for all $x \in X$.
        \item $X$ is \textbf{Fr\'{e}chet-Urysohn at $x$} if for every $A \subseteq X$ with $x \in \mbox{cl}(A)$, there is a sequence $\langle x_n : n \in \omega \rangle$ from $A$ so that $x_n \to x$. $X$ is \textbf{Fr\'{e}chet-Urysohn} if it is Fr\'{e}chet-Urysohn at $x$ for all $x \in X$.
        \item $X$ has \textbf{strong countable fan tightness at $x$} if for every sequence of subsets $\langle A_n : n \in \omega \rangle$ with $x \in \mbox{cl}(A_n)$ for all $n$, we can find an $x_n \in A_n$ for each $n$ so that $x \in \mbox{cl}(\{x_n : n \in \omega\})$. $X$ has \textbf{strong countable fan tightness} if it has strong countable fan tightness at $x$ for all $x \in X$.
        \item $X$ has \textbf{countable fan tightness at $x$} if for every sequence of subsets $\langle A_n : n \in \omega \rangle$ with $x \in \mbox{cl}(A_n)$ for all $n$, we can find finite subsets $F_n \subseteq A_n$ for each $n$ so that $x \in \mbox{cl}(\bigcup_n F_n)$. $X$ has \textbf{countable fan tightness} if it has countable fan tightness at $x$ for all $x \in X$.
        \item $X$ is \textbf{strongly Fr\'{e}chet-Urysohn at $x$} if for every sequence of subsets $\langle A_n : n \in \omega \rangle$ with $x \in \mbox{cl}(A_n)$ for all $n$, we can find an $x_n \in A_n$ for each $n$ so that $x_n \to x$. $X$ is \textbf{strongly Fr\'{e}chet-Urysohn} if it is strongly Fr\'{e}chet-Urysohn at $x$ for all $x \in X$.
    \end{itemize}
\end{definition}

Finally, we have a few remarks about density.

\begin{definition}
    Let $(X,\tau)$ be a topological space.
    \begin{itemize}
        \item $X$ is \textbf{separable} if there is a countable $D \subseteq X$ so that $X = \mbox{cl}(D)$.
        \item $X$ is \textbf{selectively separable} if for every sequence $\langle D_n : n \in \omega \rangle$ of dense subsets of $X$, we can find $x_n \in D_n$ so that $\{x_n : n \in \omega\}$ is dense.
    \end{itemize}
\end{definition}

We take another moment here to discuss how these properties relate to each other. The first chain of implications is that strong countable fan tightness implies countable fan tightness which in turn implies countable tightness. Note also that selectively separable implies separable. It is also true that the strong Fr\'{e}chet-Urysohn property implies the Fr\'{e}chet-Urysohn property which implies countable tightness. We can separate the properties out with some counter examples.
\begin{itemize}
    \item The Arens-Fort space is countably tight but not Fr\'{e}chet-Urysohn. This space is defined to be $\omega^2$ where a set is open if it either does not contain $(0,0)$ or it contains all but a finite number of points in all but a finite number of columns. It is countably tight because it is countable, but it fails to be Fr\'{e}chet-Urysohn because no sequence converges to $(0,0)$.
    \item The space $C_p(\mathbb R)$ of continuous functions on $\mathbb R$ with the topology of point-wise convergence, is countably tight at all points (because the real line is $\omega$-Lindel\"{o}f), but it does not have strong countable fan tightness at any point (because the real line is not $\omega$-Rothberger, see \cite{ClontzHolshouser}).
    \item If $X$ is the irrational numbers, then $C_p(X)$ is separable, but as the irrationals are not $\omega$-Menger, $C_p(X)$ is not selectively separable (see Theorem 35 of \cite{Scheepers1999}).
\end{itemize}

The unified way in which these definitions developed is summarized by the concept of a selection principle.

\begin{definition}
    Let $\mathcal A$ and $\mathcal B$ be collections of sets.
    \begin{itemize}
        \item We say that $S_1(\mathcal A, \mathcal B)$ is true if for every sequence $\langle A_n : n \in \omega \rangle$ of sets from $\mathcal A$, we can find $x_n \in A_n$ so that $\{x_n : n \in \omega\} \in \mathcal B$. This is called a \textbf{single selection principle}.
        \item We say that $S_{\mbox{fin}}(\mathcal A, \mathcal B)$ is true if for every sequence $\langle A_n : n \in \omega \rangle$ of sets from $\mathcal A$, we can find finite subsets $F_n \subseteq A_n$ so that $\bigcup_n F_n \in \mathcal B$. This is called a \textbf{finite selection principle}.
    \end{itemize}
\end{definition}

If we allow ourselves some notation to compactly describe the topological objects of interest, we can re-frame many of the previous definitions as selection principles. Let
\begin{itemize}
    \item $\mathcal O(X)$ denote the open covers of $X$,
    \item $\Omega(X)$ denote the $\omega$-covers of $X$,
    \item for $x \in X$, $\Omega_{X,x}$ denote $\{A \subseteq X : x \in \mbox{cl}(A)\}$,
    \item for $x \in X$, $\Gamma_{X,x}$ denote the set of sequences from $X$ that converge to $x$, and
    \item $\mathcal D_X$ denote the dense subsets of $X$.
\end{itemize}

Here is a sample of the equivalences we obtain with this notation.
\begin{itemize}
    \item $X$ is Rothberger if and only if $S_1(\mathcal O(X), \mathcal O(X))$ is true.
    \item $X$ is $\omega$-Menger if and only if $S_{\mbox{fin}}(\Omega(X), \Omega(X))$ is true.
    \item $X$ has strong countable fan tightness at $x$ if $S_1(\Omega_{X,x},\Omega_{X,x})$ is true.
    \item $X$ is selectively separable if $S_1(\mathcal D_X, \mathcal D_X)$ is true.
\end{itemize}

For more details on selection principles and relevant references, see \cite{Scheepers1996} \cite{Scheepers2001}, \cite{Scheepers2003}, and \cite{Kocinac}.

\section{A Brief Overview of Strongly Proper Forcings}

In this section, we will present the basics of strongly proper forcings. The reader who is already familiar with this material need only consult Definition \ref{def:ResFunction} for some notation that we will use throughout the paper.

For the next three definitions, fix a large enough regular cardinal $\theta\geq\om_2$, a countable elementary submodel $M\prec H(\theta)$, and a poset $\ps\in M$.

\begin{definition}
    Let $q\in\ps$ be a condition. A condition $\bar{q}\in M\cap\ps$ is a \emph{residue} of $q$ to $M$ if for all $s\leq\bar{q}$ with $s\in M$, $s$ is compatible with $q$.
\end{definition}

In other words, every extension of $\bar{q}$ within $M$ is compatible with $q$.

\begin{definition}
    A condition $p\in\ps$ is an $(M,\ps)$-\emph{strongly generic condition} if every $q\leq p$ has a residue $\bar{q}$ to $M$.
\end{definition}

The next layer in the definitions specifies when the poset $\ps$ is strongly generic for the model $M$; the definition roughly asserts that there are plenty of $(M,\ps)$-strongly generic conditions.

\begin{definition}
    $\ps$ is \emph{strongly proper} for $M$ if every $s\in M\cap\ps$ can be extended to an $(M,\ps)$-strongly generic condition.
\end{definition}

It will be helpful to have a function which, for each $(M,\ps)$-strongly generic condition $q$, chooses a residue of $q$ to $M$. The next definition addresses this.

\begin{definition}\label{def:ResFunction}
    Suppose that $\ps$ is strongly proper for a countable $M\prec H(\theta)$. An $(M,\ps)$-\emph{residue function} is a function $\res_M$ whose domain is the set of $(M,\ps)$-strongly generic conditions and which satisfies that for any such condition $p$, $\res_M(p)$ is a residue of $p$ to $M$.
\end{definition}

Finally, we specify when $\ps$ is strongly generic for a set of models:

\begin{definition}
    Let $\cal{S}\seq [H(\theta)]^{\aleph_0}$ be a set of countable elementary submodels of $H(\theta)$. $\ps$ is \emph{strongly generic} for $\cal{S}$ if for all $M\in\cal{S}$, $\ps$ is strongly generic for $M$.
\end{definition}

In practice, the set $\cal{S}$ is large in the sense of being \emph{at least} stationary in $[H(\theta)]^{\aleph_0}$. Rather than reviewing the concepts of generalized stationarity, we provide the following lemma giving the consequence of stationarity that we use in practice:

\begin{lemma}
    Suppose that $\cal{S}\seq [H(\theta)]^{\aleph_0}$ is a stationary set of countable elementary submodels of $H(\theta)$. Then for any $x\in [H(\theta)]^{\leq\aleph_0}$, there is an $M\in\cal{S}$ so that $x\in M$.
\end{lemma}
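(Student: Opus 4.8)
The plan is to derive this from the most basic fact of generalized stationarity: a set $\mathcal{S}\subseteq[H(\theta)]^{\aleph_0}$ is stationary exactly when it meets every club, where a \emph{club} is a subset of $[H(\theta)]^{\aleph_0}$ that is $\subseteq$-cofinal and closed under unions of countable $\subseteq$-increasing chains. (Equivalently, by the Kueker--Jech characterization, every club contains a set of the form $C_F=\{M:M\text{ is closed under }F\}$ for some $F\colon H(\theta)^{<\omega}\to H(\theta)$, and each such $C_F$ is itself a club.) Given this, it suffices to exhibit a single club all of whose members contain $x$ as an element; intersecting that club with $\mathcal{S}$ then immediately yields the desired $M$.

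First I would check that $x$ really is an \emph{element} of $H(\theta)$, so that the statement ``$x\in M$'' and the club I want to form even make sense. Since $x$ is a subset of $H(\theta)$ of size at most $\aleph_0$ and $\theta$ is regular and uncountable, each member of $x$ has transitive closure of size $<\theta$, and a countable union of such transitive closures (together with $x$ itself) again has size $<\theta$ by regularity of $\theta$. Hence $\operatorname{trcl}(x)$ has size $<\theta$, so $x\in H(\theta)$.

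Next I would define $C=\{M\in[H(\theta)]^{\aleph_0}:x\in M\}$ and verify that it is a club. For cofinality, given any $y\in[H(\theta)]^{\aleph_0}$ the set $y\cup\{x\}$ is again countable, lies above $y$, and belongs to $C$. For closure, if $\langle M_n:n\in\omega\rangle$ is a $\subseteq$-increasing sequence from $C$, then $\bigcup_n M_n$ is countable and still contains $x$, so it lies in $C$. Alternatively, one may simply observe that $C$ coincides with the closure set $C_{c_x}$ for the constant function $c_x\equiv x$, which is a club by the characterization quoted above.

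Finally, since $\mathcal{S}$ is stationary and $C$ is a club, $\mathcal{S}\cap C\neq\emptyset$; any $M$ in this intersection is a countable elementary submodel belonging to $\mathcal{S}$ with $x\in M$, as required. The combinatorial content here is essentially trivial once the definitions are unwound; the only points demanding genuine care are the verification that $x\in H(\theta)$ (which is where regularity of $\theta$ is used) and the bookkeeping around whichever precise formulation of club and stationary is in force for the reader.
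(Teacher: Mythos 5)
Your proof is correct, and it is essentially the argument the paper has in mind: the paper states this lemma without proof, explicitly deferring to ``standard facts about generalized stationarity,'' and your unwinding --- verifying $x\in H(\theta)$ via regularity of $\theta$, observing that $\{M\in[H(\theta)]^{\aleph_0}:x\in M\}$ is a club, and intersecting it with $\mathcal{S}$ --- is precisely that standard argument. The care you take over the point the paper leaves silent, namely that $x\in[H(\theta)]^{\leq\aleph_0}$ is itself an element of $H(\theta)$ so that ``$x\in M$'' is meaningful, is a genuine (if small) improvement in completeness over the paper's presentation.
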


Note in the conclusion of the previous lemma that since $x\in M$ and $x$ is countable, we in fact have $x\subset M$.

\begin{remark}
    Throughout this paper we will be working with an arbitrary poset $\ps$ which is strongly proper for a stationary $\cal{S}\seq [H(\theta)]^{\aleph_0}$. This is weaker than the usual definition of strongly proper. The usual definition says that $\ps$ is a strongly proper poset if it is strongly proper for a \emph{club} of countable elementary submodels of $H(\theta)$ for all large enough regular $\theta$.
\end{remark}

The next item shows that the value of $\theta$ doesn't play a significant role; the proof of the lemma relies on standard facts about generalized stationarity.

\begin{lemma}
    Suppose that $\theta<\theta^*$ are regular and $\ps\in H(\theta)$ a poset. Then $\ps$ is strongly proper for a stationary $\cal{S}\seq[H(\theta)]^{\aleph_0}$ iff $\ps$ is strongly proper for a stationary $\cal{S}^*\seq[H(\theta^*)]^{\aleph_0}$.
\end{lemma}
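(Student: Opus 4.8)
The plan is to reduce the lemma to two standard facts about generalized stationarity—the \emph{projection} and \emph{pullback} of stationary sets between $[H(\theta)]^{\aleph_0}$ and $[H(\theta^*)]^{\aleph_0}$ (here $H(\theta)\seq H(\theta^*)$ since $\theta<\theta^*$)—combined with the observation that strong properness for a model is a purely local notion. Concretely, inspecting the definitions of \emph{residue}, of $(M,\ps)$-\emph{strongly generic condition}, and of \emph{strong properness for $M$}, every clause quantifies only over conditions of $\ps$ and over elements of $M\cap\ps$; hence whether $\ps$ is strongly proper for $M$ depends only on the set $M\cap\ps$, not on the rest of $M$. Since $\ps\in H(\theta)$ and $H(\theta)$ is transitive, $\ps\seq H(\theta)$, so for any $M^*\prec H(\theta^*)$ we have $M^*\cap\ps=(M^*\cap H(\theta))\cap\ps$. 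Consequently, whenever $\theta\in M^*$—so that $M^*\cap H(\theta)\prec H(\theta)$ by the usual fact about intersections of elementary submodels—$\ps$ is strongly proper for $M^*$ if and only if it is strongly proper for $M^*\cap H(\theta)$.

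For the direction from $\theta$ to $\theta^*$, suppose $\ps$ is strongly proper for a stationary $\cal{S}\seq[H(\theta)]^{\aleph_0}$. Let $C$ be the club of $M^*\in[H(\theta^*)]^{\aleph_0}$ with $M^*\prec H(\theta^*)$ and $\theta\in M^*$, and set $\cal{S}^*=\{M^*\in C:M^*\cap H(\theta)\in\cal{S}\}$. The pullback fact gives that $\cal{S}^*$ is stationary in $[H(\theta^*)]^{\aleph_0}$. By the local observation, $\ps$ is strongly proper for each $M^*\in\cal{S}^*$, since it is strongly proper for the corresponding $M^*\cap H(\theta)\in\cal{S}$ (note $\ps\in M^*\cap H(\theta)\seq M^*$ automatically, as members of $\cal{S}$ contain $\ps$).

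For the direction from $\theta^*$ to $\theta$, suppose $\ps$ is strongly proper for a stationary $\cal{S}^*\seq[H(\theta^*)]^{\aleph_0}$. First intersect $\cal{S}^*$ with the club of elementary submodels of $H(\theta^*)$ containing $\theta$ and $\ps$; this preserves stationarity. Then project, setting $\cal{S}=\{M^*\cap H(\theta):M^*\in\cal{S}^*\}$. The projection fact gives that $\cal{S}$ is stationary in $[H(\theta)]^{\aleph_0}$, and each of its members is an elementary submodel of $H(\theta)$ because the corresponding $M^*$ contains $\theta$. The local observation again transfers strong properness from each $M^*$ down to $M^*\cap H(\theta)$, so $\ps$ is strongly proper for $\cal{S}$.

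The main obstacle is not any single computation but invoking the two generalized-stationarity facts in their correct forms: that the projection $\{M^*\cap H(\theta):M^*\in\cal{S}^*\}$ of a stationary set is stationary in $[H(\theta)]^{\aleph_0}$, and that the pullback $\{M^*:M^*\cap H(\theta)\in\cal{S}\}$ of a stationary set is stationary in $[H(\theta^*)]^{\aleph_0}$. Everything else is bookkeeping—checking that restricting to the clubs ``$\theta\in M^*$,'' ``$M^*\prec H(\theta^*)$,'' and ``$\ps\in M^*$'' is harmless, and that the identity $M^*\cap\ps=(M^*\cap H(\theta))\cap\ps$ lets strong properness pass unchanged between the two levels.
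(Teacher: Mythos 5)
Your proposal is correct and is precisely the argument the paper has in mind: the paper omits the proof, remarking only that it ``relies on standard facts about generalized stationarity,'' and those facts are exactly the projection and lifting (pullback) of stationary sets that you invoke, combined with your observation that strong properness for $M$ depends only on $M\cap\ps$, so that $M^*\cap\ps=(M^*\cap H(\theta))\cap\ps$ transfers the property between levels. Your supporting detail is also sound, including the step that $\theta\in M^*\prec H(\theta^*)$ suffices for $M^*\cap H(\theta)\prec H(\theta)$, since $H(\theta)$ is a class definable in $H(\theta^*)$ from the parameter $\theta$.
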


We now give one example of a strongly proper forcing, for the sake of exposition.

\begin{definition}
    Let $\ka$ be a regular cardinal. $\Add(\om,\ka)$ is the poset to add $\ka$-many Cohen reals. Conditions are finite partial functions $p:\ka\times\om\rightharpoonup 2$. $q$ extends $p$, denoted $q\leq p$, if $p\seq q$.
\end{definition}

\begin{example}
    Let $\theta>\ka$ both be regular. Then $\Add(\om,\ka)$ is strongly proper for any countable $M\prec H(\theta)$ with $\ka\in M$.

    Indeed, let $M\prec H(\theta)$ be given so that $\ka\in M$, from which we conclude that the poset $\Add(\om,\ka)$ is in $M$. We claim that every condition $p\in\Add(\om,\ka)$ is a strongly $(M,\ps)$-generic condition. Fix any condition $p$. Let $\bar{p}$ be the restriction of $p$ to $\dom(p)\cap M$. Then $\bar{p}$ is a finite subset of $M$ and hence an element of $M$ (recall that a residue must be a member of the model). Any $s\leq\bar{p}$ with $s\in M$ is compatible with $p$ since $s$ and $p$ agree on their common domain, namely, $\dom(p)\cap M=\dom(\bar{p})$. 
\end{example}

Strongly proper forcings form a much wider class than Cohen forcing to add subsets of $\om$. The next lemma (see Lemma 1.12 of \cite{GiltonNeeman} for a proof) gives an example of a different kind of forcing which is strongly proper. 

\begin{lemma}\label{lemma:stronglypropertwostep}
    Let $\dot{\Q}$ be an $\Add(\om,\om_1)$-name for a countably closed poset, and define $\ps:=\Add(\om,\om_1)\ast\dot{\Q}$. Then for any large enough regular $\theta$ and any countable $M\prec H(\theta)$ with $\dot{\Q}\in M$, $\ps$ is strongly proper for $M$.
\end{lemma}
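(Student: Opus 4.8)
The plan is to leave the Cohen coordinate untouched and to build the $\dot\Q$-coordinate by a fusion using countable closure. Write $\de := M \cap \om_1$ and note, as in the Example, that $M \cap \Add(\om,\om_1)$ is exactly the set $\ps_0$ of finite partial functions with domain contained in $\de \times \om$; this is a countable complete suborder of $\Add(\om,\om_1)$, every condition of $\Add(\om,\om_1)$ is already $(M,\Add(\om,\om_1))$-strongly generic, and the residue of a condition $p$ is its restriction $p \restriction (\de\times\om)$. Given $(p_0,\dot q_0) \in M \cap \ps$, I will produce the strongly generic condition in the form $(p_0,\dot q^*)$, so all the work goes into constructing the name $\dot q^*$ and an accompanying residue function.

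Construction of $\dot q^*$: since every Cohen condition is $(M,\Add(\om,\om_1))$-generic, $(p_0,\cdot)$ forces $M[\dot G] \prec H(\theta)[\dot G]$, $M[\dot G]\cap V = M$, and $\dot\Q$ countably closed in $V[\dot G]$. Because $M$ is countable there are only countably many $\Add(\om,\om_1)$-names $\dot E \in M$ for dense subsets of $\dot\Q$; fix an enumeration $\langle \dot E_n : n \in\om\rangle$. I will arrange a name for a descending $\om$-sequence $\langle \dot c_n : n\in\om\rangle$ in $\dot\Q$ with $\dot c_0 \le \dot q_0$ and $\dot c_{n+1} \in \dot E_n$, and let $\dot q^*$ name a lower bound, which exists by countable closure. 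The subtlety is that the successor choices must be made with enough freedom: I will drive them by the Cohen generic on a block of fresh coordinates in $[\de,\om_1)$, so that any single condition fixes only finitely many of the choices. This yields a name $\dot q^*$ with $(p_0,\dot q^*) \le (p_0,\dot q_0)$ forcing $\dot q^*$ to lie below an $M[\dot G]$-generic filter on $\dot\Q$.

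Residue function and verification: I will set $\res_M(p',\dot q') := (p'\restriction(\de\times\om),\, \dot q_0)$, an element of $M\cap\ps$. To check the residue property, fix $(s,\dot t) \le (p'\restriction(\de\times\om),\dot q_0)$ with $(s,\dot t)\in M$; I must show $(s,\dot t)$ and $(p',\dot q')$ are compatible. By the first-coordinate analysis $s$ and $p'$ are compatible, with $s\cup p' \le s,p'$. The key point is that compatibility in $\ps$ is existential over generics, so it suffices to find a single $r\le s\cup p'$ forcing $\dot t$ and $\dot q'$ compatible in $\dot\Q$. Now $\dot D_{\dot t} := \{w : w\le\dot t \text{ or } w\perp\dot t\}$ has a name in $M$, hence equals some $\dot E_n$, so the generic descending sequence meets it. Using the freedom in the fresh coordinates of $s\cup p'$, of which only finitely many are fixed, I steer the generic so that the sequence passes \emph{below} $\dot t$; then $\dot q' \le \dot q^* \le \dot c_{n+1}\le \dot t$, so $\dot q'$ and $\dot t$ are compatible, as required.

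The main obstacle is exactly this steering step, and it is where the Cohen factor does the work that mere countable closure cannot. A single deterministically-chosen descending sequence need not pass below a prescribed $\dot t$, and indeed a countably closed poset by itself is not strongly proper; what rescues the argument is that countable closure supplies the lower bound $\dot q^*$ at the limit, while the infinitely many fresh Cohen coordinates above $\de$ let us, for each requested $(s,\dot t)$ separately, choose a witnessing generic steering the sequence below $\dot t$. Making these two ingredients coexist---interleaving the diagonalization against $\langle\dot E_n\rangle$ with the reservation of fresh coordinates for steering, and verifying that the resulting $\dot q^*$ works uniformly for \emph{every} extension $(p',\dot q')$---is the technical heart of the proof; the remaining bookkeeping, that $\res_M$ is defined on all strongly generic extensions and lands in $M$, is routine.
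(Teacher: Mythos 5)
Your two ingredients are the right ones (the first-coordinate analysis, the reduction of compatibility in $\ps$ to forcing compatibility in $\dot\Q$, and the recognition that the fresh Cohen coordinates must do what countable closure cannot), and note that the paper itself gives no proof of this lemma but cites Lemma 1.12 of \cite{GiltonNeeman}; measured against that argument, however, your verification has a genuine gap, located exactly at the step you defer as the ``technical heart,'' and your residue function is not merely unverified but false. Concretely, let $\dot\Q$ name the poset of countable partial functions from $\om_1$ to $2$ as computed in the extension (forced to be countably closed), and let $\dot q_0$ name its empty condition. Let $t_0$ be the condition with domain $\{0\}$ and value $1$. The canonical name $\dot D_{\check{t}_0}$ for $\{w : w\le \check{t}_0 \text{ or } w\perp\check{t}_0\}$ lies in $M$, so it is $\dot E_n$ for some $n$; every member of that dense set has $0$ in its domain, so it is forced that $0\in\dom(\dot q^*)$ (as $\dot q^*\le\dot c_{n+1}\in\dot D_{\check{t}_0}$ is forced), and hence some $p'\le p_0$ forces $\dot q^*(0)=i$ for a fixed $i\in\{0,1\}$. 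Now let $t$ be the condition with domain $\{0\}$ and value $1-i$. Then $(p'\restriction(\de\times\om),\check{t})$ is in $M\cap\ps$ and extends your proposed residue $(p'\restriction(\de\times\om),\dot q_0)$, yet it is incompatible with $(p',\dot q^*)$: a common extension would have Cohen part below $p'$, hence force $\dot q^*(0)=i$, while also forcing some $\dot w\le\check{t},\dot q^*$, which is absurd. So the residue property fails for your $\res_M$. The diagnosis is a quantifier-order problem your steering cannot repair: $\dot q^*$ and the enumeration $\la\dot E_n:n\in\om\ra$ are fixed before the adversary $(p',\dot q')$ moves, and $p'$ may already have committed the sequence against the relevant $\dot t$ (as above, for some $\dot t$ it necessarily has). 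Separately, steering only at the stage where $\dot D_{\dot t}$ appears is insufficient even when that stage is free, since an earlier stage may already have forced $\dot c_k\perp\dot t$, after which meeting $\dot D_{\dot t}$ can only take its second alternative.

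The repair is the actual content of the cited proof, and it changes both of your moving parts. First, the residue of $(p',\dot q')$ must record in its second coordinate what $p'$ has decided about the descending sequence (in the example, a name forced to send $0$ to $i$), not the original $\dot q_0$. Second, and more fundamentally, diagonalizing against a countable list of dense sets named in $M$ is replaced by genuine genericity: $M\cap\Add(\om,\om_1)=\Add(\om,\de)$ is a complete suborder of $\Add(\om,\om_1)$, and the quotient of the countable poset $M\cap\ps$ by $G_0\restriction(\de\times\om)$ is, in $V[G_0\restriction(\de\times\om)]$, a countable (essentially non-atomic) poset, hence forcing-equivalent there to $\Add(\om,1)$; a fresh Cohen column, being generic over $V[G_0\restriction(\de\times\om)]$, therefore induces a filter on this quotient meeting \emph{every} dense subset lying in that model. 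Letting $\dot q^*$ name a lower bound (countable closure) of the $\dot\Q$-parts of the induced filter, the condition $(p_0,\dot q^*)$ forces $\dot G\cap M$ to be a $V$-generic filter on the poset $M\cap\ps$, and this is equivalent to strong genericity: if some $r\le(p_0,\dot q^*)$ had no residue, the set of elements of $M\cap\ps$ incompatible with $r$ would be dense in $M\cap\ps$ and lie in $V$, so $r$ would force $\dot G$ to meet it, contradicting that members of $\dot G$ are pairwise compatible. On this route no explicit residue function need be exhibited at all, and the steering becomes uniform in the adversary because genericity holds over a model that already contains any finite commitment an extension can make, rather than over a list chosen in advance.
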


However, the class of strongly proper posets does \emph{not} include any countably-closed posets (and hence it is a smaller class than the class of proper posets). The next item, a standard fact about strongly proper forcings, explains why:

\begin{lemma}
    Suppose that $\theta$ is a large enough regular cardinal and that $\ps$ is strongly proper for a stationary $\cal{S}\seq[H(\theta)]^{\aleph_0}$. Then forcing with $\ps$ adds a Cohen real over $V$.
\end{lemma}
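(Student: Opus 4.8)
The plan is to extract a Cohen real directly from the residue structure. Fix a regular $\theta$, let $\dot{G}$ name the generic filter on $\ps$, and fix some $M\in\cal{S}$ together with a residue function $\res_M$. The key observation is that since $\ps$ is strongly proper for $M$, the set of $(M,\ps)$-strongly generic conditions is dense below any condition in $M\cap\ps$ (in fact, by strong properness, any $s\in M\cap\ps$ extends to such a condition, and this property is inherited downward). So I would first argue that there is a dense set of conditions $p$ which are $(M,\ps)$-strongly generic, whence the generic filter $G$ will, by genericity, meet this dense set, and moreover will consist of strongly generic conditions densely often.

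Next, the heart of the argument: I want to show that $G\cap M$ is ``sufficiently generic'' for the countable poset $M\cap\ps$ to produce a Cohen real. Concretely, for each dense subset $D\in M$ of $\ps$, I claim $\{p : \res_M(p) \in D\}$ (intersected with the strongly generic conditions) is dense in $\ps$. To see this, take any condition $q$; extend it to a strongly generic condition $p$; then $\bar{p}=\res_M(p)\in M\cap\ps$, and by elementarity $D\cap M$ is dense in $M\cap\ps$, so inside $M$ we may find $s\le\bar{p}$ with $s\in D\cap M$; because $\bar{p}$ is a residue of $p$, $s$ is compatible with $p$, and a common extension (which we can again take to be strongly generic) forces $s\in\dot{G}$. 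Running this over all the countably many dense sets coded in $M$ shows that in $V[G]$, the set $H := \{s\in M\cap\ps : s\in G\}$ meets every dense subset of $M\cap\ps$ lying in $M$, i.e. $H$ is an $(M\cap\ps)$-generic filter over $M$ in the sense of meeting all ground-model-coded dense sets.

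From here I would invoke the standard fact that $M\cap\ps$ is a nontrivial countable atomless separative poset (nontrivial because $\ps$ is, and atomless because strongly proper posets add reals and hence have no atoms below conditions in $M$; separativity can be arranged by passing to the separative quotient), and a filter on a countable atomless separative poset that meets every dense set in the ground model is mutually interdefinable with a Cohen-generic real. More carefully, I would fix in $V$ a countable dense-in-itself subset of $M\cap\ps$ and build an order-isomorphism (or at least a dense embedding) into the Cohen poset $2^{<\om}$, using that any two countable atomless separative posets have isomorphic separative completions or, more elementarily, that any countable atomless separative poset densely embeds into Cohen forcing. Transporting $H$ across this embedding yields a filter on $2^{<\om}$ meeting every dense set from $V$, i.e. a Cohen real over $V$.

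The main obstacle I anticipate is the step establishing that $H$ genuinely meets \emph{all} dense sets of $M\cap\ps$ that lie in $V$, rather than merely those lying in $M$: a priori genericity of $G$ only interacts with dense sets named in $M$ via elementarity. The resolution is that $M$ is countable and elementary, so every dense subset of the countable poset $M\cap\ps$ that belongs to $V$ is in fact an element of $M$ (since $M\cap\ps$ and all its subsets are coded by reals, and the relevant dense sets we need to meet are exactly those in $M$ by the elementarity argument above)—more precisely, meeting all dense sets in $M$ suffices because the density argument, run inside $M$, captures precisely the dense sets that a Cohen real must meet, once we fix the embedding in $V\cap M$. Care is needed to ensure the dense embedding into $2^{<\om}$ is itself an element of $M$, so that the pullbacks of dense subsets of $2^{<\om}$ land among the dense sets of $M\cap\ps$ that $H$ provably meets; this is arranged by choosing the embedding by elementarity inside $M$.
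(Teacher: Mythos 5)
Your overall skeleton is the same as the paper's (the trace of the generic on the countable poset $M\cap\ps$ is $V$-generic, and a nontrivial countable poset has a dense subset isomorphic to $\Add(\om,1)$, so transporting the trace gives a Cohen real over $V$), but there are two genuine gaps. The first is where stationarity is used: you fix a single $M\in\cal{S}$ at the outset and claim the $(M,\ps)$-strongly generic conditions are dense in $\ps$. Strong properness for $M$ only provides strongly generic extensions of conditions lying in $M\cap\ps$; for a fixed $M$ such conditions need not be dense in $\ps$. Concretely, take $\ps=\Add(\om,\om_1)\ast\dot{\Q}$ where $\dot{\Q}$ names $\Col(\om_1,\om_2)$ of the extension; by Lemma \ref{lemma:stronglypropertwostep} this is strongly proper for club-many, hence stationarily many, countable $M$. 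Fix such an $M$, pick $\gamma\in\om_2\setminus M$, and let $\dot q$ name the collapse condition $\lb(0,\gamma)\rb$. Every extension of $(1,\dot q)$ forces the value at $0$ to be $\gamma\notin M$, and no such condition has a residue to $M$: any candidate residue can be extended, inside $M$, to a condition whose first coordinate decides the value at $0$ to be some $\delta\in M\cap\om_2$, and that extension is incompatible with the given condition. So no extension of $(1,\dot q)$ is $(M,\ps)$-strongly generic, your sets $\lb p:\res_M(p)\in D\rb$ are not dense below $(1,\dot q)$, and in fact the trace $G\cap(M\cap\ps)$ of a generic containing $(1,\dot q)$ provably fails to be $V$-generic for $M\cap\ps$. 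The repair is exactly the paper's use of stationarity: the model must vary with the condition. Given arbitrary $p\in\ps$, choose $M\in\cal{S}$ with $p\in M$, extend $p$ to an $(M,\ps)$-strongly generic $q$, and run your argument below $q$; since it suffices that the conditions forcing ``there is a Cohen real over $V$'' are dense, this proves the lemma.

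The second gap is your resolution of the obstacle you correctly identified: it is false that every dense subset of $M\cap\ps$ lying in $V$ is an element of $M$. The model $M$ is countable, while the countable poset $M\cap\ps$ has continuum-many dense subsets in $V$ (and indeed $M\cap\ps$ itself is typically not an element of $M$). Fortunately the obstacle has a direct resolution which your own residue computation nearly gives, and which is what the paper's sentence ``$q$ forces that $\dot{G}\cap (M\cap\ps)$ is $V$-generic over $M\cap\ps$'' encapsulates: let $D\in V$ be \emph{any} dense subset of $M\cap\ps$ (no membership in $M$ assumed) and let $r\leq q$. Then $\bar r=\res_M(r)\in M\cap\ps$, so by density of $D$ in $M\cap\ps$ there is $s\leq_{M\cap\ps}\bar r$ with $s\in D$; since $D\seq M\cap\ps$ we automatically have $s\in M$, so the residue property makes $s$ compatible with $r$, and any common extension forces $\check{s}\in\dot{G}\cap D$. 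Elementarity and $D\in M$ play no role here. By contrast, your version handles only dense sets of the form $D\cap M$ for $D\in M$ dense in $\ps$, i.e., countably many, which yields a filter generic over $M$ but not over $V$, and hence does not produce a Cohen real over $V$. With these two corrections your outline becomes the paper's proof.
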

\begin{proof}
    Let $p\in\ps$ be arbitrary. Let $M\in\cal{S}$ so that $p\in M$. Extend $p$ to an $(M,\ps)$-strongly generic condition $q$. Then $q$ forces that $\dot{G}\cap (M\cap\ps)$ is $V$-generic over $M\cap\ps$. But $M\cap\ps$ is a non-trivial, countably infinite poset. Hence $M\cap\ps$ contains a dense subset isomorphic to $\Add(\om,1)$. From this it follows that a $V$-generic for $M\cap\ps$ adds a $V$-generic over $\Add(\om,1)$.
\end{proof}

\section{Preservation of Countably Tight spaces and of Lindel{\"o}f spaces}

In this section, we introduce our first round of preservation theorems. We show that forcings which are strongly proper for stationarily-many models preserve the countable tightness of a space; we also show that such forcings preserve when a space is Lindel{\"o}f. In the next section, we will address the preservation of Menger and Rothberger spaces, and we will need to introduce some additional technology for those arguments. We begin by clarifying what it means for a forcing to preserve a topological property.

Fix a topological space $(X,\tau)$ and a poset $\ps$ in the ground model $V$. In the extension of $V$ by a $V$-generic $G$, we consider the set $X$ with the topology $\tau^G$ which has $\tau$ as a basis. Intuitively speaking, the information from the generic filter allows us to combine old things (open sets in $\tau$) in new ways to create additional open sets.

We say that $\ps$ preserves some topological property $\Phi$ of $(X,\tau)$ if every condition in $\ps$ forces that $(X,\tau^{\dot{G}})$ has property $\Phi$.

\begin{example}
    For a simple (counter)example, consider the unit interval $[0,1]$ from the ground model, and let $\ps$ be a poset that adds a new subset of $\om$, and hence a new real. Note that $\ps$ then adds a new real between 0 and 1. Then $[0,1]^V$ is no longer compact in $V[G]$. Since $\ps$ adds a new real between 0 and 1, we can use rational open intervals from $V$ to define, in $V[G]$, an open cover of $[0,1]^V$ with no finite subcover.
\end{example}

\begin{remark}
    When checking that a property $\Phi$ of a space holds, we often need only check the property with respect to basic open sets. In the context of proving preservation theorems, this allows us to simplify the discussion by working with names for open sets in the ground model, since the ground model topology $\tau$ is a basis for the topology on $X$ in $V[G]$.
\end{remark}

\begin{definition}\label{def:trace}
    Let $\ps$ be a poset, $(X,\tau)$ a topological space and $\dot{\cal{U}}$ a $\ps$-name for an open cover of $(X,\tau)$ forced to be a subset of $\tau$. Fix $p\in\ps$. We define the \emph{trace of $\dot{\cal{U}}$ below $p$}, denoted $\cal{U}_p$, to be the following set:
    $$
    \cal{U}_p:=\lb W\in\tau:(\exists q\leq p)\;[q\Vdash \check{W}\in\dot{\cal{U}}]\rb.
    $$
\end{definition}

\begin{lemma}\label{lemma:trace}
    Suppose that $\dot{\cal{U}}$ is a $\ps$-name for an open cover of $(X,\tau)$ forced to be a subset of $\tau$. Let $p\in\ps$ be arbitrary. Then $\cal{U}_p$ is an open cover of $(X,\tau)$.
\end{lemma}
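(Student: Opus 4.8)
The plan is to show that $\cal{U}_p$ covers $X$ by fixing an arbitrary point $x \in X$ and producing some $W \in \cal{U}_p$ with $x \in W$. Each element of $\cal{U}_p$ is by definition an open set in $\tau$, so the only thing to verify is the covering condition, namely that $X = \bigcup \cal{U}_p$.

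First I would fix $x \in X$ and consider the name $\dot{\cal{U}}$. Since $\dot{\cal{U}}$ is forced to be an open cover of $(X,\tau^{\dot{G}})$, in particular every condition forces that $x$ is covered by some member of $\dot{\cal{U}}$. The subtlety here is that in the extension, the element of $\dot{\cal{U}}$ covering $x$ could a priori be a $\tau^{\dot{G}}$-open set built from ground model pieces, rather than a set literally in $\tau$. This is exactly where I would invoke the hypothesis that $\dot{\cal{U}}$ is forced to be a subset of $\tau$: that assumption guarantees that the witnessing open set is (a name for) an element of the ground model topology $\tau$. So I can find, below $p$, a condition and a specific ground model open set doing the covering.

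More precisely, I would argue as follows. Work below $p$. Since $p$ forces that $\dot{\cal{U}}$ covers $X$ and $x \in X$, and since $\dot{\cal{U}}$ is forced to be a subset of $\tau$, by a density/maximality argument there is some $q \leq p$ and some $W \in \tau$ with $x \in W$ such that $q \Vdash \check{W} \in \dot{\cal{U}}$. Indeed, $p$ forces ``there exists $U \in \dot{\cal{U}}$ with $x \in U$''; using that $\dot{\cal{U}} \subseteq \tau$ is forced, one can find $q \le p$ deciding a specific $\check{W}$ for $W \in \tau$ to be such a $U$, and since $q \Vdash x \in \check{W}$ and membership in a ground model set is absolute, genuinely $x \in W$. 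By Definition \ref{def:trace}, this $W$ lies in $\cal{U}_p$, and $x \in W$. As $x$ was arbitrary, $\cal{U}_p$ covers $X$.

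I expect the main obstacle to be the careful bookkeeping in the density argument, specifically ensuring that the open set witnessing the cover is genuinely a ground model element of $\tau$ (not merely $\tau^{\dot{G}}$-open) and that its containment $x \in W$ transfers from the forcing statement back to $V$ by absoluteness of membership. Both points rest on the hypothesis that $\dot{\cal{U}}$ is forced to be a subset of $\tau$, so the crux is to use that hypothesis cleanly rather than to perform any intricate calculation.
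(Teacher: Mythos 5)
Your proposal is correct and follows essentially the same route as the paper: fix $x \in X$, use the hypothesis that $\dot{\cal{U}}$ is forced to be a subset of $\tau$ to extract a specific ground model open set $W \ni x$ together with a condition $q \leq p$ forcing $\check{W} \in \dot{\cal{U}}$, so that $q$ witnesses $W \in \cal{U}_p$. The only difference is cosmetic: the paper runs the argument semantically through an arbitrary generic filter $G \ni p$ and the truth lemma, while you phrase it syntactically as a density/decision argument below $p$, with the absoluteness of $x \in W$ noted explicitly.
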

\begin{proof}
    Fix $x\in X$. Let $G$ be an arbitrary $V$-generic filter over $\ps$ with $p\in G$. Then $\cal{U}:=\dot{\cal{U}}[G]$ is an open cover of $(X,\tau^G)$ consisting of sets in $\tau$. Let $W\in\cal{U}$ so that $x\in W$. Since $W\in\tau$ is a set in the ground model, we may find a condition $q\leq p$ with $q\in G$ so that
    $$
    q\Vdash \check{W}\in\dot{\cal{U}}.
    $$
    Then $q$ witnesses that $W\in\cal{U}_p$. Since $x\in W$, this completes the proof.
\end{proof}

\begin{lemma}\label{lemma:lovely}
Suppose that $(X,\tau)$ is a Lindel{\"o}f space. Let $\ps$ be a poset, $\theta$ a large enough regular cardinal, and $M\prec H(\theta)$ countable containing  $\ps$ and $(X,\tau)$ as elements. Let $\cal{\dot{\U}}$ be a $\ps$-name for an open cover of $X$ with $\cal{\dot{\U}}\in M$ so that $\,\dot{\cal{U}}$ is forced to be a subset of $\tau$.

Then for all $x\in X$ and all $s\in M\cap\ps$, there exist $t\leq_{M\cap\ps} s$ and $W\in M\cap\tau$ so that
$$
t\Vdash x\in \check{W}\in\dot{\cal{U}}.
$$
\end{lemma}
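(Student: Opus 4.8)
\emph{The plan} is to combine the trace construction from Definition~\ref{def:trace} with the Lindel{\"o}f property of $X$ and the elementarity of $M$. The key difficulty is that the point $x$ is completely arbitrary and need not belong to $M$, so I cannot simply reflect a statement about $x$ into $M$. The remedy is to use Lindel{\"o}fness to pass from the (possibly uncountable) trace cover $\cal{U}_s$ down to a \emph{countable} subcover lying inside $M$. Since a countable set belonging to $M$ is in fact a subset of $M$, such a subcover consists entirely of members of $M\cap\tau$, and so whichever of its members happens to contain $x$ is automatically an element of $M$ --- even though $x$ itself is not.

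I would begin by forming the trace $\cal{U}_s$ of $\dot{\cal{U}}$ below $s$. Since $\cal{U}_s$ is defined by a formula whose parameters $s,\ps,\dot{\cal{U}},\tau$ all lie in $M$, elementarity gives $\cal{U}_s\in M$. By Lemma~\ref{lemma:trace}, $\cal{U}_s$ is an open cover of $(X,\tau)$, and since $X$ is Lindel{\"o}f, $H(\theta)$ satisfies the statement ``there is a countable $\cal{V}\seq\cal{U}_s$ with $X=\bigcup\cal{V}$.'' As all relevant parameters are in $M$, elementarity produces such a $\cal{V}$ with $\cal{V}\in M$. Because $\cal{V}$ is countable and belongs to $M$, a surjection $\om\to\cal{V}$ witnessing its countability lies in $M$, and since $\om\seq M$ this forces $\cal{V}\seq M$; in particular every member of $\cal{V}$ is an element of $M\cap\tau$.

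To finish, fix the arbitrary $x\in X$. Since $\cal{V}$ covers $X$, choose $W\in\cal{V}$ with $x\in W$; then $W\in M\cap\tau$ by the previous paragraph. As $W\in\cal{U}_s$, the definition of the trace says $H(\theta)\models(\exists q\leq s)\,[q\Vdash\check{W}\in\dot{\cal{U}}]$, and since $s,W,\dot{\cal{U}}\in M$, elementarity yields a witness $t\in M\cap\ps$ with $t\leq s$, i.e.\ $t\leq_{M\cap\ps}s$, such that $t\Vdash\check{W}\in\dot{\cal{U}}$. Finally, $x\in W$ is a fact about the ground-model sets $x$ and $W$, hence absolute and forced by every condition, so $t\Vdash\check{x}\in\check{W}$. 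Combining the two forced statements gives $t\Vdash x\in\check{W}\in\dot{\cal{U}}$, as required. The only genuinely delicate point, as noted, is the reduction to a countable subcover inside $M$; once that is in place, the rest is a routine elementarity-and-trace argument.
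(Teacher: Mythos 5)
Your proposal is correct and follows essentially the same route as the paper's proof: form the trace $\cal{U}_s$, note it is an open cover lying in $M$ by Lemma~\ref{lemma:trace} and definability, use Lindel{\"o}fness plus elementarity to extract a countable subcover in $M$ (hence a subset of $M$), pick the member containing $x$, and reflect the existence of a forcing witness into $M$. The only difference is cosmetic: you spell out the absoluteness of ``$x\in W$'' and the surjection argument for $\cal{V}\seq M$, which the paper leaves implicit.
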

\begin{proof}
    Fix $s\in M\cap\ps$. Let $\cal{U}_s$ be the trace of the name $\dot{\cal{U}}$ below $s$, as defined in Definition \ref{def:trace}. By Lemma \ref{lemma:trace}, $\cal{U}_{s}$ is an open cover of $(X,\tau)$. Additionally, $\cal{U}_{s}$ is a member of $M$ since it is definable in $H(\theta)$ by parameters in $M$. By applying the fact that $(X,\tau)$ is Lindel{\"o}f and the elementarity of $M$, we may find a countable $\cal{B}_{s}\seq\cal{U}_{s}$ so that
    \begin{itemize}
        \item $\cal{B}_{s}$ is an open cover of $(X,\tau)$ and
        \item $\cal{B}_{s}\in M$.
    \end{itemize}
    
    Since $\cal{B}_{s}$ is countable and a member of $M$, it is a subset of $M$. Since $\cal{B}_{s}$ is an open cover, we may find $W\in\cal{B}_{s}$ with $x\in W$. However, $W\in\cal{U}_{s}$ and $W\in M$. Therefore $M$ contains a witness that $W\in\cal{U}_{s}$; that is, there is a $t\leq_{M\cap\ps}s$ so that $t\Vdash W\in\dot{\cal{U}}$. This completes the proof of the lemma.
\end{proof}

    For the reader who may not be familiar with strongly proper forcings, it is worth making a remark here about the structure of the following proofs. When showing that a certain property of $(X,\tau)$ is preserved by $\ps$, we will begin with an arbitrary condition $p\in\ps$ with the goal of extending $p$ to a condition which forces the desired property (hence $\ps$ itself forces the preservation of the desired property). We will find the condition $q$ by first placing $p$, as well as other parameters of interest, inside of some countable elementary submodel $M$. An $(M,\ps)$-strongly generic condition $q$ extending $p$ is then the desired condition. To see this, we proceed to argue that no extension, say $r$, of $q$ can force a specific counterexample. This will involve taking a residue $\res_M(r)$ of $r$ to $M$ and working with the elementarity of $M$.

\begin{theorem}\label{LindelofPreserved}
    Let $\theta$ be a large enough regular cardinal, $(X,\tau)\in H(\theta)$ a Lindel{\"o}f space, and $\ps\in H(\theta)$ a poset. Suppose that the set of countable $M\prec H(\theta)$ so that $\ps$ is strongly proper for $M$ is stationary in $[H(\theta)]^{\aleph_0}$. Then $\ps$ forces that $(X,\tau)$ is Lindel{\"o}f.
\end{theorem}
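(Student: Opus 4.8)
The plan is to follow the template laid out in the remark preceding the theorem: start with an arbitrary condition, capture all the relevant data inside a suitable countable elementary submodel $M$, and then use an $(M,\ps)$-strongly generic extension as the condition that forces the desired property. First I would reduce to a convenient form for the name of the cover. By the remark on checking properties against basic open sets (and since $\tau$ is a basis for $\tau^{\dot{G}}$), it suffices to treat $\ps$-names $\dot{\cal{U}}$ for open covers that are forced to be subsets of $\tau$: a name for an arbitrary open cover refines to such a name, and a countable subcover of the refinement lifts back to a countable subcover of the original. So fix such a $\dot{\cal{U}}$ and an arbitrary $p\in\ps$; the goal is to produce $q\leq p$ forcing that $\dot{\cal{U}}$ has a countable subcover.

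Next I would invoke the stationarity hypothesis to choose a countable $M\prec H(\theta)$ with $p,\ps,(X,\tau),\dot{\cal{U}}\in M$ such that $\ps$ is strongly proper for $M$. Since $p\in M\cap\ps$, strong properness lets me extend $p$ to an $(M,\ps)$-strongly generic condition $q$, and I fix a residue function $\res_M$. The natural candidate for the countable subcover is $\cal{V}:=\{W\in M\cap\tau : W\in\dot{\cal{U}}^{\dot{G}}\}$, which is by construction a subcollection of $\dot{\cal{U}}^{\dot{G}}$ and, being contained in the countable ground-model set $M\cap\tau$, is forced to be countable. Thus the theorem reduces to showing that $q$ forces $\cal{V}$ to cover $X$.

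To that end I would show, for each fixed $x\in X$, that the set of conditions forcing $x$ into some $W\in M\cap\tau$ with $W\in\dot{\cal{U}}$ is dense below $q$. Given $r\leq q$, let $\bar{r}:=\res_M(r)\in M\cap\ps$ be its residue. Applying Lemma \ref{lemma:lovely} with $s=\bar{r}$ — which packages together elementarity and the Lindel\"{o}f property of $(X,\tau)$ — furnishes $t\leq_{M\cap\ps}\bar{r}$ and $W\in M\cap\tau$ with $t\Vdash x\in\check{W}\in\dot{\cal{U}}$. Since $t\in M$ and $t\leq\bar{r}$, the defining property of the residue $\bar{r}$ guarantees that $t$ is compatible with $r$; any common extension $r'\leq r,t$ then forces $x\in\check{W}\in\dot{\cal{U}}$. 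Genericity then shows that $q$ forces every $x\in X$ to lie in some member of $\cal{V}$, so $q$ forces $\cal{V}$ to be a countable subcover, as required.

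I expect the crux — and the only genuinely non-routine step — to be this compatibility argument: the passage from the abstract data produced inside $M$ by Lemma \ref{lemma:lovely} back to a condition extending the arbitrary $r\leq q$. This is precisely where strong properness (rather than mere properness) is used, through the residue of $r$ to $M$. By contrast, the reduction to $\tau$-valued names and the final density/genericity bookkeeping are standard.
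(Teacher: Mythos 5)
Your proposal is correct and follows essentially the same route as the paper: choose $M$ containing $p$, $\ps$, $(X,\tau)$, $\dot{\cal{U}}$ via stationarity, extend $p$ to an $(M,\ps)$-strongly generic $q$, take $M\cap\dot{\cal{U}}[\dot{G}]$ as the countable subcover, and use the residue $\res_M(r)$ together with Lemma \ref{lemma:lovely} and the compatibility guaranteed by strong genericity to defeat any purported counterexample below $q$. The only cosmetic difference is that you spell out the reduction to $\tau$-valued names, which the paper dispatches via its earlier remark on basic open sets.
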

\begin{proof}
    Fix a condition $p\in\ps$ and a $\ps$-name $\dot{\cal{U}}$ for an open cover of $X$ consisting of elements of $\tau$. We will find a condition $q\leq p$ and a $\ps$-name $\dot{\cal{U}}_0$ so that
    $$
    q\Vdash\dot{\cal{U}}_0\in\left[\dot{\cal{U}}\right]^{\aleph_0}\text{ and }\;\dot{\cal{U}}_0\text{ covers }X.
    $$

    By the stationarity assumed in the statement of the theorem, we may find a countable $M\prec H(\theta)$ so that $\lb (X,\tau),\ps,p\rb\subset M$ and so that every condition in $M\cap\ps$ has an extension to an $(M,\ps)$-strongly generic condition. In particular, we may find $q\leq p$ so that $q$ is $(M,\ps)$-strongly generic. Let $\res_M$ be a residue function for $M$ (see Definition \ref{def:ResFunction}).
    
    Define $\dot{\cal{U}}_0$ by taking an arbitrary $V$-generic $G$ over $\ps$ and setting $\dot{\cal{U}}_0[G]=M\cap\dot{\cal{U}}[G]$. Note that $\dot{\cal{U}}_0$ is forced to be countable since it is forced to be a subset of $M$, which itself is countable. We claim that
    $$
    q\Vdash \dot{\cal{U}}_0\text{ covers }X.
    $$

    To prove this claim, it suffices to show that no $r\leq q$ can force a specific counterexample. In other words, it suffices to argue that for all $r\leq q$ and all $x\in X$, there exist an open set $W\in M\cap\tau$ and a condition $r^*\leq r$ so that
    $$
    r^*\Vdash x\in \check{W}\we \check{W}\in\dot{\cal{U}};
    $$
    note that such an $r^*$ then forces $\check{W}\in\dot{\cal{U}}_0$.
    Thus fix such $r$ and $x$. Since $q$ is strongly $(M,\ps)$-generic and $r\leq q$, we know that $\bar{r}:=\res_M(r)$ is defined. By Lemma \ref{lemma:lovely}, we may find a $t\leq_{M\cap\ps}\bar{r}$ and an open $W\in M\cap\tau$ so that
    $$
    t\Vdash x\in\check{W}\in\dot{\cal{U}}.
    $$
    Since $t\in M$ and $t$ extends $\bar{r}$, we know that $t$ and $r$ are compatible. Let $r^*$ be a condition extending both, completing the proof.
    \end{proof}

We obtain the preservation of the $\omega$-Lindel\"{o}f property as a corollary of this theorem.

\begin{corollary}\label{OmegaLindelof}
        Let $\theta$ be a large enough regular cardinal, $(X,\tau)\in H(\theta)$ an $\omega$-Lindel{\"o}f space, and $\ps\in H(\theta)$ a poset. Suppose that the set of countable $M\prec H(\theta)$ so that $\ps$ is strongly proper for $M$ is stationary in $[H(\theta)]^{\aleph_0}$. Then $\ps$ forces that $(X,\tau)$ is $\omega$-Lindel{\"o}f.
\end{corollary}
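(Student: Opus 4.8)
The plan is to reduce $\omega$-Lindel\"ofness to ordinary Lindel\"ofness of finite powers and then invoke Theorem \ref{LindelofPreserved}. By Remark \ref{remark:omegaversion} (the Gerlits--Nagy theorem), a space is $\omega$-Lindel\"of if and only if each of its finite powers is Lindel\"of. Since this equivalence is a theorem of $\zfc$, it is available both in $V$ and in any generic extension $V[G]$, and I would use it in both directions.

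First I would record that the hypotheses transfer to finite powers. For each $n\in\om$, let $\tau^{(n)}$ denote the ground-model product topology on $X^n$, a basis for which is the collection of boxes $U_1\times\cdots\times U_n$ with each $U_i\in\tau$. Since $X$ is $\omega$-Lindel\"of in $V$, Remark \ref{remark:omegaversion} gives that $(X^n,\tau^{(n)})$ is Lindel\"of for every $n$. Moreover $(X^n,\tau^{(n)})\in H(\theta)$, and the stationarity hypothesis on $\ps$ is exactly the one required by the preservation theorem. Hence, for each fixed $n$, Theorem \ref{LindelofPreserved} tells us that $\ps$ forces $(X^n,(\tau^{(n)})^{\dot G})$ to be Lindel\"of.

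The key bookkeeping step is to check that powers commute with the generic extension of the topology. I would argue that the product of the extended topology equals the extension of the product topology: in $V[G]$ the topology on $X$ is generated by $\tau$, so a basis for the $n$-th power of $(X,\tau^{G})$ consists of boxes of $\tau$-open sets, which is precisely a basis for $(\tau^{(n)})^{G}$. Thus the space $(X^n,(\tau^{(n)})^{G})$ is literally the $n$-th power of $(X,\tau^{G})$ in $V[G]$. Combining this identification with the previous paragraph, every condition forces that all finite powers of $(X,\tau^{\dot G})$ are Lindel\"of.

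Finally, applying Remark \ref{remark:omegaversion} inside $V[G]$, I conclude that $(X,\tau^{\dot G})$ is $\omega$-Lindel\"of there; as $G$ was arbitrary, $\ps$ forces $(X,\tau)$ to be $\omega$-Lindel\"of. I expect the main obstacle to be precisely the commuting-of-powers step: one must be careful that the product topology formed in $V[G]$ from the extended topology coincides with the extension of the ground-model product topology, so that the instance of Theorem \ref{LindelofPreserved} applied to $X^n$ genuinely speaks about the $n$-th power of $(X,\tau^{\dot G})$ rather than some unrelated space. Everything else is an application of results already in hand.
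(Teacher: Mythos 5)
Your proposal is correct and follows essentially the same route as the paper's own proof: reduce $\omega$-Lindel\"ofness to Lindel\"ofness of all finite powers via Remark \ref{remark:omegaversion}, apply Theorem \ref{LindelofPreserved} to each $X^n$, and transfer back in the extension. The only difference is that you spell out the verification that $(X^n,(\tau^{(n)})^{G})$ coincides with the $n$-th power of $(X,\tau^{G})$, a bookkeeping point the paper leaves implicit; making it explicit is a sound addition, not a deviation.
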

\begin{proof}
    Recall that a space is $\omega$-Lindel\"{o}f if and only if $X^n$ is Lindel\"{o}f for all $n$. Assuming $X$ is $\omega$-Lindel\"{o}f, Theorem \ref{LindelofPreserved} implies that $X^n$ is forced to be Lindel\"{o}f for all $n$. This is turn suffices to show that $X$ is forced to be $\omega$-Lindel\"{o}f.
\end{proof}

Now we address the preservation of countable tightness of spaces by posets which are strongly proper for stationarily many models. Our theorem generalizes Lemma 5.6 of \cite{DowSubmodels}, in which Dow showed that any poset of the form $\Add(\om,\om_1)\ast\dot{\Q}$, where $\dot{\Q}$ is forced to be countably closed, preserves countable tightness. Such posets, as stated in Lemma \ref{lemma:stronglypropertwostep}, are strongly proper. We also recall that a countably closed poset needn't preserve the countable tightness of a space.

\begin{theorem}
    Let $\theta$ be a large enough regular cardinal, $(X,\tau)\in H(\theta)$ a countably tight space, and $\ps\in H(\theta)$ a poset. Suppose that the set of countable $M\prec H(\theta)$ so that $\ps$ is strongly proper for $M$ is stationary in $[H(\theta)]^{\aleph_0}$. Then $\ps$ forces that $(X,\tau)$ is countably tight.
\end{theorem}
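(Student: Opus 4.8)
The plan is to follow the template of Theorem~\ref{LindelofPreserved}, replacing the Lindel\"{o}f property and the trace of a cover by countable tightness and a trace of a \emph{set}. Since $X\in V$ and forcing only enriches the topology (from $\tau$ to $\tau^{\dot G}$), every point of $X$ is a ground-model point, and---because $\tau$ is a basis for $\tau^{\dot G}$---membership in a closure can be tested against ground-model basic open sets. Concretely, it suffices to fix a condition $p$, a $\ps$-name $\dot A$ for a subset of $X$, and a point $x\in X$ with $p\Vdash x\in\cl(\dot A)$, and to produce $q\leq p$ together with a name $\dot B$ forced to be a countable subset of $\dot A$ such that $q\Vdash x\in\cl(\dot B)$; a routine density argument ranging over all names $\dot A$ and all $x\in X$ then yields that $\ps$ forces countable tightness.

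First I would set up the model and the generic condition exactly as in Theorem~\ref{LindelofPreserved}: using stationarity, choose a countable $M\prec H(\theta)$ with $\{(X,\tau),\ps,p,\dot A,x\}\subseteq M$ for which $\ps$ is strongly proper, fix a residue function $\res_M$ (Definition~\ref{def:ResFunction}), and pick an $(M,\ps)$-strongly generic $q\leq p$. I would then let $\dot B$ be the name with $\dot B[G]=M\cap\dot A[G]$, the analogue of $\dot{\cal U}_0=M\cap\dot{\cal U}$; this is automatically forced to be a countable subset of $\dot A$. The whole content is the claim $q\Vdash x\in\cl(\dot B)$, which I would verify by showing that no $r\leq q$ forces a counterexample: for every $r\leq q$ and every basic open $W\in\tau$ with $x\in W$, I would produce $r^*\leq r$ and a point $y\in W\cap M$ with $r^*\Vdash\check y\in\dot A$, so that $r^*\Vdash\check y\in W\cap\dot B$, contradicting any hypothetical $r\Vdash W\cap\dot B=\es$.

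The heart of the argument---the analogue of Lemma~\ref{lemma:lovely}---is to locate such a $y$, combining countable tightness in $V$ with the residue. Given $r$ and $W$, set $\bar r:=\res_M(r)\in M\cap\ps$ and consider the trace $A_{\bar r}=\{y\in X:\exists s\leq\bar r,\ s\Vdash\check y\in\dot A\}$, which lies in $M$ since it is definable (as in Definition~\ref{def:trace}) from $\dot A,\bar r,\ps\in M$. I would first show $x\in\cl(A_{\bar r})$: for any basic $W'\in M\cap\tau$ with $x\in W'$, a common extension $r'\leq r,\bar r$ still refines $p$ and hence forces $W'\cap\dot A\neq\es$, which places a point of $W'$ into $A_{\bar r}$; by elementarity, checking this for $W'\in M$ suffices to conclude $M\models x\in\cl(A_{\bar r})$, i.e. $x\in\cl(A_{\bar r})$. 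Reflecting countable tightness into $M$ then yields a countable $B\in M$ with $B\subseteq A_{\bar r}$ and $x\in\cl(B)$; since $B$ is countable and in $M$ we have $B\subseteq M$, and since $x\in\cl(B)$ the given neighborhood $W$ meets $B$ at some $y$. Finally $y\in A_{\bar r}\cap M$ gives, by elementarity, a witness $s\in M\cap\ps$ with $s\leq\bar r$ and $s\Vdash\check y\in\dot A$; as $s\in M$ extends the residue $\bar r$, it is compatible with $r$, so any $r^*\leq r,s$ is as required.

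The main obstacle to watch is the step $x\in\cl(A_{\bar r})$: the residue $\bar r$ need not lie below $p$, so one cannot simply assert $\bar r\Vdash x\in\cl(\dot A)$. The fix is to pass to a common extension $r'\leq r,\bar r$ (legitimate since $\bar r=\res_M(r)$ is compatible with $r$), exploit that $r'\leq p$ forces $x\in\cl(\dot A)$, and only afterwards retreat to $\bar r$ when recording membership in the trace; elementarity of $M$ then upgrades the neighborhood-by-neighborhood check over $W'\in M$ to genuine closure. A secondary subtlety is that the neighborhood $W$ witnessing a potential counterexample need not belong to $M$---this is exactly why $y$ must be extracted from a countable $B$ with $x\in\cl(B)$ (so that even neighborhoods outside $M$ meet $B$), rather than by a direct appeal to elementarity with $W$ as a parameter.
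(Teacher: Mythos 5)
Your proposal is correct and follows essentially the same route as the paper's proof: pick an $(M,\ps)$-strongly generic $q\leq p$, take $\dot B=M\cap\dot A$, form the trace $A_{\bar r}$ of $\dot A$ below the residue $\bar r=\res_M(r)$, reflect countable tightness through $M$ to get a countable $B\seq A_{\bar r}$ with $B\seq M$ and $x\in\cl(B)$, and finish by the compatibility of an $s\leq\bar r$ in $M$ with $r$. The only divergence is that the paper arranges for the whole poset to force $x\in\cl(\dot A)$, so that any $t\leq\bar r$ already forces $\check E\cap\dot A\neq\es$, whereas you work only below $p$ and repair the step $x\in\cl(A_{\bar r})$ by passing to a common extension $r'\leq r,\bar r$ before recording witnesses in the trace --- a correct (and if anything slightly more self-contained) handling of the same point.
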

\begin{proof}
    Fix a condition $p\in\ps$, a point $x^*\in X$, and a $\ps$-name $\dot{A}$ so that 
    $$
    \Vdash_\ps x^*\in\cl(\dot{A}),
    $$
    i.e., so that this is forced by the whole poset. We will find a condition $q\leq p$ and a $\ps$-name $\dot{B}$ so that
    $$
    q\Vdash\dot{B}\in [\dot{A}]^{\aleph_0}\we x^*\in\cl(\dot{B}).
    $$

    By the stationarity assumed in the statement of the theorem, we may find a countable $M\prec H(\theta)$ so that $\lb (X,\tau),\ps,p,x^*,\dot{A}\rb\subset M$ and so that every condition in $M\cap\ps$ has an extension to an $(M,\ps)$-strongly generic condition. In particular, we may find $q\leq p$ so that $q$ is $(M,\ps)$-strongly generic. Let $\res_M$ be a residue function for $M$.

    Since $M$ is countable, $M\cap\dot{A}$ is forced to be a countable subset of $\dot{A}$. Taking $M\cap\dot{A}$ to be the name $\dot{B}$ above, it suffices to show that $q\Vdash x^*\in\cl(M\cap\dot{A})$. Since $\tau$ (the topology in the ground model) is a basis for the topology in the extension, it suffices to verify the definition of $x^*$ being in the closure of $M\cap\dot{A}$ when applied to open sets from the ground model.

    In light of this, fix a condition $r\leq q$ and an open $U\in \tau$ with $x^*\in U$. We will find an extension $r^*\leq r$ so that
    $$
    r^*\Vdash U\cap (M\cap\dot{A})\neq\es.
    $$

    Since $r\leq q$ and $q$ is $(M,\ps)$-strongly generic, let $\bar{r}:=\res_M(r)$. Let $A_{\bar{r}}$ be the trace of $\dot{A}$ below $\bar{r}$, namely
    $$
    A_{\bar{r}}:=\lb a\in X:(\exists t\leq_\ps\bar{r})\;t\Vdash\check{a}\in\dot{A}\rb;
    $$
    we emphasize that a witness to $a\in A_{\bar{r}}$ needn't be a condition in $M$, only a condition below $\bar{r}$.

    Arguing in the ground model $V$, we claim that $x^*\in\cl(A_{\bar{r}})$. Thus fix $E\in\tau$ with $x^*\in E$. Let $t\leq\bar{r}$ (not necessarily with $t\in M$) be any condition which forces that $\check{E}\cap\dot{A}\neq\es$. By further extending $t$ if necessary, we may find a specific $a\in X$ so that
    $$
    t\Vdash\check{a}\in\check{E}\cap\dot{A}.
    $$
    Then $a\in A_{\bar{r}}$, since $t$ is a witness, and moreover, $a\in E$. This completes the proof that $x^*\in\cl(A_{\bar{r}})$.

    Continuing, we observe that $A_{\bar{r}}\in M$, since $A_{\bar{r}}$ is definable in $H(\theta)$ by parameters in $M$. Since $(X,\tau)$ is countably tight in $V$, we may apply the elementarity of $M$ to find a $B\in\left[A_{\bar{r}}\right]^{\aleph_0}$ with $B\in M$ so that $x^*\in\cl(B)$. Since $B\in M$ and $B$ is countable, we in fact have that $B\seq M$.

    Now recall the open set $U$, fixed above. Since $x^*\in U\cap\cl(B)$, we may find a point $a\in B\cap U$. But $B\seq M$, so $a\in M$. Moreover, $B\seq A_{\bar{r}}$, so $a\in A_{\bar{r}}$ as well. By applying the elementarity of $M$ once again, we may find an extension $t\leq\bar{r}$ with $t\in M$ so that $t\Vdash\check{a}\in\dot{A}$. Hence
    $$
    t\Vdash\check{a}\in U\cap (M\cap\dot{A}).
    $$
    However, since $t\leq\bar{r}$ and since $t$ is in $M$, we know that $t$ is compatible with $r$. Hence we may fix an extension $r^*$ of $t$ and $r$. This $r^*$ then finishes the proof.
\end{proof}

\section{Preservation of some Covering Properties}

In this section we will prove that forcings which are strongly proper for stationarily-many countable elementary submodels preserve the Rothberger and Menger properties of a space. To do so, we will need to translate the idea of an ``endowment" (originally from \cite{DowRemote}, and later used in \cite{DTWNewProofs}, \cite{GJTForcingAndNormality}, and \cite{Kada}) to this context. The first subsection will handle this task, and in the second subsection, we will prove the preservation theorems.

\subsection{Endowments, Approximations, and Strongly Proper Forcings}\label{subsection:Endow}

For the whole of this subsection, we fix the following objects:
\begin{itemize}
    \item a poset $\ps$;
    \item a Lindel{\"o}f space $(X,\tau)$;
    \item a $\ps$-name $\dot{\cal{U}}$ for an open cover of $(X,\tau)$ forced to consist of elements of $\tau$;
    \item a large enough regular cardinal $\theta$;
    \item a countable $M\prec H(\theta)$ containing $\ps, (X,\tau)$, and $\dot{\cal{U}}$ as parameters; and
    \item an enumeration $\vp_M:\om\to M\cap\ps$ of $M\cap\ps$.
\end{itemize}

We will (slightly) generalize the notion of an endowment to the present context in order to establish the existence of useful approximations to generic open covers.

Before launching into the precise definitions, we take a moment to get the reader acquainted with the idea. Recall that a maximal antichain $A$ in $M\cap\ps$ is an antichain so that
$$
\text{for all }p\in M\cap\ps,\;\text{there is a }q\in A\text{ s.t. }q\text{ is compatible with }p.
$$
For $n\in\om$, an element $A_0$ of an ``$n$-endowment" $\cal{L}^M_n$ will be a \emph{finite} antichain in $M\cap\ps$ which contains witnesses to the above statement, but only applied to the first $(n+1)$-many conditions in $M$. Namely, $A_0$ will satisfy
$$
\text{for all }k\leq n,\;\text{there is a }q\in A_0\text{ s.t. }q\text{ is compatible with }\vp_M(k).
$$
The point of these finite antichains is that each condition $q$ in such an $A_0$ will come paired with an open set which $q$ forces into $\dot{\cal{U}}$, and (wishing to hedge our bets) we intend to intersect these finitely-many open sets.

Now for the more precise definitions. Let $\cal{A}_M$ denote the set of all maximal antichains of the poset $M\cap\ps$. For each $A\in\cal{A}_M$, let $A(n)$ be a finite subset of $A$ satisfying the following: for all $k\leq n$, there exists $q\in A(n)$ so that $\vp_M(k)$ is compatible with $q$. 

\begin{definition}
    We define the \emph{$n$-th endowment of $M$ and $\ps$}, denoted $\cal{L}^M_n$, to be the set
    $$
    \cal{L}^M_n:=\lb A(n):A\in\cal{A}_M\rb.
    $$

    The \emph{endowment of $M$ and $\ps$}, denoted $\cal{L}^M$, is the sequence $\la\cal{L}^M_n:n\in\om\ra$.
\end{definition}

Now we are ready to use $\cal{L}^M$ to define a sequence $\la\cal{V}^M_n(\dot{\cal{U}}):n\in\om\ra$ of approximations to $\dot{\cal{U}}$.

\begin{remark}\label{remark:ModelLindelof}
    We emphasize here that we only define the sequence $\la\cal{V}^M_n(\dot{\cal{U}}):n\in\om\ra$ of approximations to $\dot{\cal{U}}$ when $(X,\tau)$ is Lindel{\"o}f and $\dot{\cal{U}}$ a member of $M$.
\end{remark}

Fix $n\in\om$. For each $x\in X$, we may apply Lemma \ref{lemma:lovely} to find a maximal antichain $A_x\seq M\cap\ps$ so that for all $p\in A_x$ there is an open $W_{x,p}\in\tau\cap M$ so that
$$
p\Vdash x\in\check{W}_{x,p}\in\dot{\cal{U}}.
$$
Let
$$
V_{x,n}:=\bigcap\lb W_{x,p}:p\in A_x(n)\rb.
$$
Since $A_x(n)$ is finite, $V_{x,n}$ is open; also, $x\in V_{x,n}$. Finally, $V_{x,n}\in M$ since each $W_{x,p}$ is in $M$ and since $A_x(n)$ is finite.

\begin{definition}
    The \emph{$n$-th $M$-approximation to $\dot{\cal{U}}$}, denoted $\cal{V}^M_n(\dot{\cal{U}})$, is defined as
    $$
    \cal{V}^M_n(\dot{\cal{U}}):=\lb V_{x,n}:x\in X\rb.
    $$
\end{definition}

\begin{remark}
    Note that $\cal{V}^M_n(\dot{\cal{U}})$ is an open cover of $(X,\tau)$, since for each $x\in X$, $x$ is a member of $V_{x,n}$.
\end{remark}

The next lemma, though simple, will be quite useful:

\begin{lemma}\label{lemma:bacon}
    For each $n\in\om$, each $E\in\cal{V}^{M}_n(\dot{\cal{U}})$, and each $k\leq n$, there is a $d \leq_{M\cap\ps}\vp_M(k)$ so that
    $$
    d\Vdash (\exists U\in\dot{\cal{U}})\;\left[\check{E}\subseteq U\right].
    $$
\end{lemma}
\begin{proof}
    Fix $n,k$, and $E$. Let $x\in X$ so that $E=V_{x,n}$. As $k\leq n$, we may find $p\in A_x(n)$ so that $p$ is compatible with $\vp_M(k)$. By the elementarity of $M$, we may let $d$ be an extension of $p$ and $\vp_M(k)$ with $d\in M$.

    Then $E=V_{x,n}\seq W_{x,p}$. Moreover, $p$ forces that $x\in\check{W}_{x,p}\in\dot{\cal{U}}$. Since $d$ extends $p$ and $\vp_M(k)$, $d$ is the desired witness.
\end{proof}

\subsection{More Preservation Theorems}

We now show that posets which are strongly proper for stationarily many models preserve the Rothberger and Menger properties. The proofs are nearly identical. Accordingly, we will prove the preservation of the Rothberger property in detail, and then we will carry out the slight modifications sufficient to prove the preservation of the Menger property. We close with some corollaries concerning the preservation of $\om$-Rothberger and $\om$-Menger spaces.

\begin{theorem}\label{theorem:PreserveRothberger}
    Suppose that $\theta$ is a large enough regular cardinal, that $(X,\tau)\in H(\theta)$ is a Rothberger space, and that $\ps\in H(\theta)$ is a poset. Suppose further that the set of countable $M\prec H(\theta)$ so that $\ps$ is strongly proper for $M$ is stationary in $[H(\theta)]^{\aleph_0}$. Then $\ps$ forces that $(X,\tau)$ is Rothberger.
\end{theorem}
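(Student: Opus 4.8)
The plan is to follow the template sketched after Lemma \ref{lemma:lovely}, now feeding the endowment machinery of Subsection \ref{subsection:Endow} with the strong form of the Rothberger property from Lemma \ref{lemma:StrongBerger}. Fix an arbitrary $p\in\ps$ and a sequence $\la\dot{\cal{U}}_n:n\in\om\ra$ of $\ps$-names for open covers of $(X,\tau)$; invoking the remark that we may check Rothberger against basic open sets, we assume each $\dot{\cal{U}}_n$ is forced to consist of elements of $\tau$. Using the stationarity hypothesis, I would pick a countable $M\prec H(\theta)$ with $\lb(X,\tau),\ps,p,\la\dot{\cal{U}}_n:n\in\om\ra\rb\subset M$ for which $\ps$ is strongly proper, fix an enumeration $\vp_M$ of $M\cap\ps$ and a residue function $\res_M$, and let $q\leq p$ be $(M,\ps)$-strongly generic. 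Since each $\dot{\cal{U}}_n\in M$, Subsection \ref{subsection:Endow} supplies for every $n$ the ground-model open cover $\cal{V}^M_n(\dot{\cal{U}}_n)$ of $(X,\tau)$.

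Next I would apply the Rothberger property of $(X,\tau)$ in $V$, in its strong form, to the sequence $\la\cal{V}^M_n(\dot{\cal{U}}_n):n\in\om\ra$ of open covers. This yields $\la E_n:n\in\om\ra$ with each $E_n\in\cal{V}^M_n(\dot{\cal{U}}_n)$ such that every $x\in X$ lies in $E_n$ for infinitely many $n$. As each $E_n$ is a ground-model set, I can use $\la E_n:n\in\om\ra$ to define the selection name $\la\dot{U}_n:n\in\om\ra$: in any extension, let $\dot{U}_n$ be some element of $\dot{\cal{U}}_n$ with $E_n\seq\dot{U}_n$ if one exists, and otherwise an arbitrary element of $\dot{\cal{U}}_n$. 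By construction $\ps$ forces $\dot{U}_n\in\dot{\cal{U}}_n$ for all $n$, so by Lemma \ref{lemma:StrongBerger} it remains only to show that $q$ forces every $x\in X$ to lie in $\dot{U}_n$ for infinitely many $n$.

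For this final step I would argue that no $r\leq q$ forces a counterexample. Suppose toward a contradiction that some $r\leq q$, some $x^*\in X$, and some $N\in\om$ satisfy $r\Vdash x^*\notin\dot{U}_n$ for all $n\geq N$. Let $\bar{r}:=\res_M(r)$ and fix $k$ with $\bar{r}=\vp_M(k)$. Since $x^*$ belongs to infinitely many $E_n$, choose $n\geq\max\{N,k\}$ with $x^*\in E_n$. Because $k\leq n$, Lemma \ref{lemma:bacon} provides $d\leq_{M\cap\ps}\vp_M(k)=\bar{r}$ with $d\Vdash(\exists U\in\dot{\cal{U}}_n)\,[\check{E}_n\seq U]$. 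As $d\in M$ and $d$ extends $\res_M(r)$, $d$ is compatible with $r$; any common extension $r^*$ then forces that some $U\in\dot{\cal{U}}_n$ contains $E_n$, whence the definition of $\dot{U}_n$ gives $r^*\Vdash E_n\seq\dot{U}_n$, so $r^*\Vdash x^*\in\dot{U}_n$. Since $n\geq N$, this contradicts the choice of $r$.

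The crux of the argument, and the reason the strong form of Rothberger (Lemma \ref{lemma:StrongBerger}) is essential rather than the literal definition, is the indexing mismatch in this last step: Lemma \ref{lemma:bacon} only lets me push the approximation $E_n$ into $\dot{\cal{U}}_n$ below the condition $\vp_M(k)$ when $k\leq n$, and the relevant $k$ is determined by the residue $\bar{r}=\res_M(r)$ of the arbitrary extension $r$. Thus for a given $r$ I can only hope to realize $E_n$ as a genuine cover element for $n$ past the threshold $k=k(r)$; having every $x$ lie in infinitely many $E_n$ is exactly what guarantees that these surviving indices still cover $X$, uniformly over all $r\leq q$. The Menger case should then go through with the same skeleton, replacing single selections by finite ones and using the strong form of Menger from Lemma \ref{lemma:StrongMenger}.
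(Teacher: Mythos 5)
Your proposal is correct and follows essentially the same route as the paper's proof: the same model $M$, strongly generic $q$, approximations $\cal{V}^M_n(\dot{\cal{U}}_n)$, the strong form of Rothberger from Lemma \ref{lemma:StrongBerger}, and the residue-plus-Lemma \ref{lemma:bacon} compatibility argument with the same indexing trick $n\geq k$. The only cosmetic difference is that you argue by contradiction against a fixed threshold $N$ (thereby establishing the strong form in the extension), whereas the paper directly shows that every $r\leq q$ and $x\in X$ admit an extension forcing $x\in\dot{U}_m$ for some $m$; these are interchangeable.
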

\begin{proof}
    Fix a condition $p\in\ps$ and a sequence $\la\dot{\cal{U}}_n:n\in\om\ra$ of names for open covers of $(X,\tau)$. We may assume that each $\,\dot{\cal{U}}_n$ is forced to consist of basic open sets, and hence is forced to be a subset of $\tau$. Our goal is to find an extension $q\leq p$ and sequence $\la\dot{U}_n:n\in\om\ra$ of names so that
    \begin{enumerate}
        \item for each $n\in\om$, $q\Vdash\dot{U}_n\in\dot{\cal{U}}_n$ and
        \item $q$ forces $\lb\dot{U}_n:n\in\om\rb$ is an open cover of $X$.
    \end{enumerate}

    Using the stationarity assumed in the statement of the theorem, let $M\prec H(\theta)$ be countable so that both of the following are true:
    \begin{itemize}
        \item $M$ contains the parameters $p$, $\ps$, $(X,\tau)$, and $\la\dot{\cal{U}}_n:n\in\om\ra$;
        \item every condition in $M$ extends to an $(M,\ps)$-strongly generic condition.
    \end{itemize}
    In particular, we may extend $p$ to a condition $q$ which is $(M,\ps)$-strongly generic.

    Let $\vp_M:\om\to M\cap\ps$ be the enumeration of $M\cap\ps$ relative to which we construct the approximations as in Subsection \ref{subsection:Endow}.

    Since each $\dot{\cal{U}}_n$ is a member of $M$ and since $(X,\tau)$ is Lindel{\"o}f, we know that $\cal{V}^M_n(\dot{\cal{U}}_n)$ is an open cover of $(X,\tau)$ for each $n\in\om$ (recall Remark \ref{remark:ModelLindelof}). We now apply the fact that $(X,\tau)$ is Rothberger in $V$. By Lemma \ref{lemma:StrongBerger} applied to the sequence $\la\cal{V}^M_n(\dot{\cal{U}}_n):n\in\om\ra$, we may find a sequence $\la W_n:n\in\om\ra$ so that $W_n\in\cal{V}^M_n(\dot{\cal{U}}_n)$ for each $n$ and so that for each $x\in X$, there are \emph{infinitely}-many $n$ with $x\in W_n$.

    Now we will define a sequence $\la\dot{U}_n:n\in\om\ra$ of names and show that $q$ forces that this sequence witnesses the definition of Rothberger in the extension. We apply the maximality principle for names by showing, given an arbitrary $V$-generic filter $G$ over $\ps$, how to define $\la\dot{U}_n[G]:n\in\om\ra$. So fix $G$, and fix $n$. If there is some $U\in\cal{U}_n$ so that $W_n\seq U$, then we let $\dot{U}_n[G]$ select some such $U$. Otherwise, we let $\dot{U}_n[G]\in\cal{U}_n$ be arbitrary.

    We show that $q$ forces that $\la\dot{U}_n:n\in\om\ra$ is an open cover of $X$ by showing that no extension of $q$ can force a specific counterexample. So fix $r\leq q$ and $x\in X$, and we will find an $r^*\leq r$ and an $n$ so that $r^*\Vdash x\in\dot{U}_n$.

    Consider $\bar{r}$, a residue of $r$ to $M\cap\ps$. Choose $k$ so that $\vp_M(k)=\bar{r}$, where $\vp_M:\om\to M\cap\ps$ is the enumeration that we fixed earlier in the proof. Since $\la W_n:n\in\om\ra$ satisfies Lemma \ref{lemma:StrongBerger}, we may find an $m\geq k$ so that $x\in W_m$. We now apply Lemma \ref{lemma:bacon} to get a condition $d\leq\bar{r}$ with $d\in M\cap\ps$ so that
    $$
    d\Vdash(\exists U\in\dot{\cal{U}}_m)\;[\check{W}_m\seq U].
    $$
    Since $d\leq_{M\cap\ps}\bar{r}$ and $\bar{r}$ is a residue of $r$ to $M$, we may find a condition $r^*\leq r,d$. Then since $r^*$ extends $d$,
    $$
    r^*\Vdash(\exists U\in\dot{\cal{U}}_m)\;[\check{W}_m\seq U].
    $$
    By definition of the name $\dot{U}_m$,  we then have
    $$
    r^*\Vdash x\in \check{W}_m\seq\dot{U}_m\in\dot{\cal{U}}_m.
    $$
    This completes the proof.
\end{proof}

As mentioned at the beginning of this subsection, the following proof is nearly identical to the proof of Theorem \ref{theorem:PreserveRothberger}.

\begin{theorem}
    Suppose that $\theta$ is a large enough regular cardinal, that $(X,\tau)\in H(\theta)$ is a Menger space, and that $\ps\in H(\theta)$ is a poset. Suppose further that the set of countable $M\prec H(\theta)$ so that $\ps$ is strongly proper for $M$ is stationary in $[H(\theta)]^{\aleph_0}$. Then $\ps$ forces that $(X,\tau)$ is Menger.
\end{theorem}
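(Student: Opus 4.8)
The plan is to mimic the proof of Theorem \ref{theorem:PreserveRothberger} essentially verbatim, making the natural adjustments dictated by the difference between the Rothberger and Menger selection principles. As before, I fix an arbitrary condition $p\in\ps$ and a sequence $\la\dot{\cal{U}}_n:n\in\om\ra$ of names for open covers, which I may assume are forced to consist of basic open sets (hence to be subsets of $\tau$). The goal is now to produce an extension $q\leq p$ together with a sequence $\la\dot{\cal{F}}_n:n\in\om\ra$ of names so that $q$ forces each $\dot{\cal{F}}_n$ to be a finite subset of $\dot{\cal{U}}_n$ and forces $\{\bigcup\dot{\cal{F}}_n:n\in\om\}$ to cover $X$. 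Using the assumed stationarity, I pick a countable $M\prec H(\theta)$ containing the parameters $p,\ps,(X,\tau),\la\dot{\cal{U}}_n:n\in\om\ra$ and so that every condition in $M\cap\ps$ extends to an $(M,\ps)$-strongly generic condition; I then extend $p$ to such a condition $q$, and I fix the enumeration $\vp_M:\om\to M\cap\ps$ relative to which the approximations $\cal{V}^M_n(\dot{\cal{U}}_n)$ are built as in Subsection \ref{subsection:Endow}.

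The key divergence from the Rothberger proof occurs at the point where one invokes the ground-model selection property. Since each $\dot{\cal{U}}_n\in M$ and $(X,\tau)$ is Lindel{\"o}f, each $\cal{V}^M_n(\dot{\cal{U}}_n)$ is an open cover of $(X,\tau)$ (Remark \ref{remark:ModelLindelof}). Now I apply the fact that $(X,\tau)$ is Menger in $V$, in the strengthened form of Lemma \ref{lemma:StrongMenger}, to the sequence $\la\cal{V}^M_n(\dot{\cal{U}}_n):n\in\om\ra$. This yields a sequence $\la\cal{G}_n:n\in\om\ra$ where each $\cal{G}_n$ is a \emph{finite} subset of $\cal{V}^M_n(\dot{\cal{U}}_n)$ and where, for every $x\in X$, there are infinitely many $n$ such that $x\in W$ for some $W\in\cal{G}_n$. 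I then define the names $\dot{\cal{F}}_n$ so that, in any generic extension by $G$, for each $W\in\cal{G}_n$ we throw into $\dot{\cal{F}}_n[G]$ some element $U\in\cal{U}_n$ with $W\seq U$ whenever such a $U$ exists (and otherwise fill out $\dot{\cal{F}}_n[G]$ arbitrarily with members of $\cal{U}_n$). Since each $\cal{G}_n$ is finite, $\dot{\cal{F}}_n$ is forced to be a finite subset of $\dot{\cal{U}}_n$, which is exactly the finite-selection requirement of the Menger property.

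The covering verification proceeds exactly as in the Rothberger case, via the residue machinery. I show that no $r\leq q$ can force a counterexample: fixing $r\leq q$ and $x\in X$, I set $\bar{r}:=\res_M(r)$ and choose $k$ with $\vp_M(k)=\bar{r}$. Using the strengthened Menger conclusion I find some $m\geq k$ and some $W\in\cal{G}_m$ with $x\in W$; since $W\in\cal{V}^M_m(\dot{\cal{U}}_m)$, Lemma \ref{lemma:bacon} (applied with this $k\leq m$) supplies a $d\leq_{M\cap\ps}\bar{r}$ with $d\in M\cap\ps$ forcing $(\exists U\in\dot{\cal{U}}_m)[\check{W}\seq U]$. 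Because $\bar{r}$ is a residue of $r$ and $d\leq_{M\cap\ps}\bar{r}$, the conditions $d$ and $r$ are compatible, so I fix $r^*\leq r,d$. By the definition of $\dot{\cal{F}}_m$ this $r^*$ forces that $W\seq\bigcup\dot{\cal{F}}_m$, and since $x\in W$, it forces $x\in\bigcup\dot{\cal{F}}_m$, as required.

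I do not expect a genuine obstacle here, since the argument is structurally identical to Theorem \ref{theorem:PreserveRothberger}; the only real work is bookkeeping. The one place demanding a little care is the definition of the names $\dot{\cal{F}}_n$: because each $\cal{G}_n$ is a finite set of ground-model open sets, I must define $\dot{\cal{F}}_n$ by ranging over the finitely many $W\in\cal{G}_n$ and selecting, for each, a covering $U\in\cal{U}_n$, thereby guaranteeing finiteness of the selection while ensuring that $\bigcup\cal{G}_n\seq\bigcup\dot{\cal{F}}_n$ is forced. Everything else—locating $M$, extracting the residue, invoking Lemma \ref{lemma:bacon}, and amalgamating $d$ with $r$—transfers without change.
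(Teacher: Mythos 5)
Your proposal is correct and follows essentially the same route as the paper: the paper likewise applies Lemma \ref{lemma:StrongMenger} to the approximations $\cal{V}^M_n(\dot{\cal{U}}_n)$, defines the names $\dot{\cal{F}}_n$ by selecting, for each finite-selection element that fits inside some member of $\cal{U}_n$, such a covering member (with an arbitrary default in the degenerate case), and then defers the covering verification to the residue argument of Theorem \ref{theorem:PreserveRothberger}. Your write-up simply makes explicit the verification that the paper leaves as ``nearly identical.''
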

\begin{proof}
    Fix a condition $p\in\ps$ and a sequence $\la\dot{\cal{U}}_n:n\in\om\ra$ of names for open covers of $(X,\tau)$. We may assume that each $\,\dot{\cal{U}}_n$ is forced to consist of basic open sets, and hence is forced to be a subset of $\tau$. Our goal is to find an extension $q\leq p$ and sequence $\la\dot{\cal{F}}_n:n\in\om\ra$ of $\ps$-names so that
    \begin{enumerate}
        \item for each $n\in\om$, $q\Vdash\dot{\cal{F}}_n\text{ is a finite subset of }\dot{\cal{U}}_n$; and
        \item $q\Vdash\bigcup\lb\dot{\cal{F}}_n:n\in\om\rb$ is an open cover of $X$.
    \end{enumerate}

    Using the stationarity assumed in the statement of the theorem, let $M\prec H(\theta)$ be countable so that both of the following are true:
    \begin{itemize}
        \item $M$ contains the parameters $p$, $\ps$, $(X,\tau)$, and $\la\dot{\cal{U}}_n:n\in\om\ra$;
        \item every condition in $M$ extends to an $(M,\ps)$-strongly generic condition.
    \end{itemize}
    In particular, we may extend $p$ to a condition $q$ which is $(M,\ps)$-strongly generic.

    Let $\vp_M:\om\to M\cap\ps$ be the enumeration of $M\cap\ps$ relative to which we construct the approximations as in Subsection \ref{subsection:Endow}.

    Apply Lemma \ref{lemma:StrongMenger} to the sequence $\la\cal{V}^M_n(\dot{\cal{U}}_n):n\in\om\ra$ to find a sequence $\la\cal{F}^M_n:n\in\om\ra$ so that each $\cal{F}^M_n$ is a finite subset of $\cal{V}^M_n(\dot{\cal{U}}_n)$, so that $\bigcup\lb\cal{F}^M_n:n\in\om\rb$ is an open cover of $(X,\tau)$, and so that for each $x\in X$, there are infinitely-many $k$ so that $x\in\bigcup\cal{F}^M_k$.

    We now define the sequence $\la\dot{\cal{F}}_n:n\in\om\ra$ of $\ps$-names which will witness the desired statement. Accordingly, let $G$ be an arbitrary $V$-generic filter over $\ps$, and fix $n\in\om$. We have two cases. First suppose that no $F\in\cal{F}^M_n$ is a subset of any $U\in\mathcal{U}_n$. In this degenerate case, we let $\cal{F}_n$ be an arbitrary finite subset of $\cal{U}_n$. Now suppose that at least one $F\in\cal{F}^M_n$ is a subset of an element of $\mathcal{U}_n$. In this case, for each such $F\in\cal{F}^M_n$, choose $U_F\in\mathcal{U}_n$ with $F\subset U_F$, and let $\cal{F}_n$ be the set of these $U_F$'s.
    

    The proof that $q$ forces that $\bigcup\lb\dot{\cal{F}}_n:n\in\om\rb$ is an open cover of $X$ is nearly identical to the corresponding proof in Theorem \ref{theorem:PreserveRothberger}.
\end{proof}

These theorems combined show that strongly proper forcings preserve the $\omega$-Rothberger and $\omega$-Menger properties as well.

\begin{corollary}
    Suppose that $\theta$ is a large enough regular cardinal, that $(X,\tau)\in H(\theta)$ is an $\omega$-Rothberger (resp. $\om$-Menger) space, and that $\ps\in H(\theta)$ is a poset. Suppose further that the set of countable $M\prec H(\theta)$ so that $\ps$ is strongly proper for $M$ is stationary in $[H(\theta)]^{\aleph_0}$. Then $\ps$ forces that $(X,\tau)$ is $\omega$-Rothberger (resp. $\om$-Menger).    
\end{corollary}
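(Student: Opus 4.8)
The plan is to mirror the proof of Corollary \ref{OmegaLindelof} (the $\om$-Lindel\"{o}f case), exploiting the characterization of the ``$\om$-versions'' in terms of finite powers recorded in Remark \ref{remark:omegaversion}. Concretely, since $X$ is $\om$-Rothberger (resp.\ $\om$-Menger) in $V$, that remark tells us that $X^n$, equipped with the product topology $\tau^n$, is Rothberger (resp.\ Menger) in $V$ for every $n \in \om$. Each such $(X^n,\tau^n)$ lies in $H(\theta)$, since $(X,\tau) \in H(\theta)$ and $\theta$ is a large enough regular cardinal, so that $H(\theta)$ is closed under forming finite powers and the associated product topologies. Moreover, the hypotheses on $\ps$ and on the stationarity of the set of models for which $\ps$ is strongly proper depend only on $\ps$ and $\theta$, not on the space, so they carry over verbatim. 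I would therefore apply Theorem \ref{theorem:PreserveRothberger} (resp.\ the preceding Menger preservation theorem) to each $(X^n,\tau^n)$ and conclude that $\ps$ forces $(X^n,\tau^n)$ to be Rothberger (resp.\ Menger) for every $n$.

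First I would carry out this reduction for all $n$ simultaneously, obtaining that $\ps$ forces ``$(X^n,(\tau^n)^{\dot G})$ is Rothberger (resp.\ Menger)'' for each $n \in \om$. Then, to close the argument, I would invoke Remark \ref{remark:omegaversion} a second time, now interpreted inside the extension $V[G]$: a space is $\om$-Rothberger iff all of its finite powers are Rothberger (and likewise for Menger). Applying this to the space $(X,\tau^{G})$ in $V[G]$ yields that $(X,\tau^{G})$ is $\om$-Rothberger (resp.\ $\om$-Menger), which is exactly the desired conclusion.

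The one point requiring genuine verification, and hence the main (if minor) obstacle, is a compatibility of topologies. Theorem \ref{theorem:PreserveRothberger} leaves $X^n$ living in the topology $(\tau^n)^{G}$, namely the topology on $X^n$ in $V[G]$ generated by the ground-model basis $\tau^n$, whereas the second application of Remark \ref{remark:omegaversion} concerns the $n$-fold product of the extended topology $\tau^{G}$ on $X$. One must check that these two topologies on $X^n$ coincide in $V[G]$. This is routine: every basic open box $V_1 \times \cdots \times V_n$ with each $V_i \in \tau^{G}$ is a union of boxes $U_1 \times \cdots \times U_n$ with each $U_i \in \tau$, since $\tau$ is a basis for $\tau^{G}$, and each such $U_1 \times \cdots \times U_n$ is a basic open set of $\tau^n$ and hence open in $(\tau^n)^{G}$; the reverse inclusion is immediate because $\tau \subseteq \tau^{G}$. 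Thus the product of $\tau^G$ and the extension $(\tau^n)^G$ of the ground-model product topology agree on $X^n$, legitimizing the passage between ``$X^n$ is Rothberger'' and ``the $n$-th power of $(X,\tau^G)$ is Rothberger,'' and completing the reduction to finite powers in both directions.
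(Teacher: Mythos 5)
Your proposal is correct and follows essentially the same route as the paper: reduce to finite powers via Remark \ref{remark:omegaversion}, apply Theorem \ref{theorem:PreserveRothberger} (resp.\ the Menger preservation theorem) to each $X^n$, and then invoke the finite-power characterization again in $V[G]$. The only difference is that you explicitly verify that the extension $(\tau^n)^{G}$ of the ground-model product topology coincides with the $n$-fold product of $\tau^{G}$ --- a detail the paper leaves implicit --- and your verification of it is correct.
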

\begin{proof}
    As in Corollary \ref{OmegaLindelof}, this follows from the fact that $X$ is $\omega$-Rothberger (resp. $\om$-Menger) if and only if for all $n$, $X^n$ is Rothberger (resp. Menger).
\end{proof}

\section{Selection Games}

Selection principles have naturally corresponding selection games, which include types of topological games.
Topological games have a long history, much of which can be gathered from Telg{\'a}rsky's survey \cite{Telgarsky}.
In this paper, we consider the traditional selection games for two players, $\rm{I}$ and $\rm{II}$, of countably infinite length.
\begin{definition}
    Given sets $\mathcal A$ and $\mathcal B$, we define the \emph{finite-selection game}
    $\mathsf{G}_{\mathrm{fin}}(\mathcal A, \mathcal B)$ for $\mathcal A$ and $\mathcal B$ as follows.
    In round $n \in \omega$, $\rm{I}$ plays $A_n \in \mathcal A$ and $\rm{II}$ responds with $\mathcal F_n \in [A_n]^{<\omega}$.
    We declare $\rm{II}$ the winner if $\bigcup\{ \mathcal F_n : n \in \omega \} \in \mathcal B$.
    Otherwise, $\rm{I}$ wins.
\end{definition}
\begin{definition}
    Given sets $\mathcal A$ and $\mathcal B$, we analogously define the \emph{single-selection game}
    $\mathsf{G}_{1}(\mathcal A, \mathcal B)$ for $\mathcal A$ and $\mathcal B$ as follows.
    In round $n \in \omega$, $\rm{I}$ plays $A_n \in \mathcal A$ and $\rm{II}$ responds with $x_n \in A_n$.
    We declare $\rm{II}$ the winner if $\{x_n : n \in \omega \} \in \mathcal B$.
    Otherwise, $\rm{I}$ wins.
\end{definition}
\begin{definition}
    By \emph{selection games}, we mean the class consisting of $\mathsf G_\square(\mathcal A, \mathcal B)$
    where $\square \in \{1, \mathrm{fin} \}$, and $\mathcal A$ and $\mathcal B$ are sets.
    So, when we say $\mathcal G$ is a selection game, we mean that there exist $\square \in \{1 , \mathrm{fin} \}$
    and sets $\mathcal A, \mathcal B$ so that $\mathcal G = \mathsf G_\square(\mathcal A, \mathcal B)$.
\end{definition}
The study of games naturally inspires questions about the existence of various kinds of strategies.
Infinite games and corresponding full-information strategies were both introduced in \cite{GaleStewart}.
Some forms of limited-information strategies came shortly after, like positional (also known as stationary) strategies.
For more on stationary and Markov strategies, see \cite{GalvinTelgarsky}.
\begin{definition}
    We define strategies of various strengths below.
    \begin{itemize}
    \item
    A \emph{strategy for $\rm{I}$} in $\mathsf{G}_1(\mathcal A, \mathcal B)$ is a function
    $\sigma:(\bigcup \mathcal A)^{<\omega} \to \mathcal A$.
    A strategy $\sigma$ for $\rm{I}$ is called \emph{winning} if whenever $x_n \in \sigma\langle x_k : k < n \rangle$
    for all $n \in \omega$, $\{x_n: n\in\omega\} \not\in \mathcal B$.
    If $\rm{I}$ has a winning strategy, we write $\mathrm{I} \uparrow \mathsf{G}_1(\mathcal A, \mathcal B)$.
    \item
    A \emph{strategy for $\rm{II}$} in $\mathsf{G}_1(\mathcal A, \mathcal B)$ is a function
    $\sigma:\mathcal A^{<\omega} \to \bigcup \mathcal A$.
    A strategy $\sigma$ for $\rm{II}$ is \emph{winning} if whenever $A_n \in \mathcal A$ for all $n \in \omega$,
    $\{\sigma(A_0,\ldots,A_n) : n \in \omega\} \in \mathcal B$.
    If $\rm{II}$ has a winning strategy, we write $\mathrm{II} \uparrow \mathsf{G}_1(\mathcal A, \mathcal B)$.
    \item
    A \emph{predetermined strategy} for $\rm{I}$ is a strategy which only considers the current turn number.
    Formally it is a function $\sigma: \omega \to \mathcal A$.
    If $\rm{I}$ has a winning predetermined strategy, we write $\mathrm{I} \underset{\mathrm{pre}}{\uparrow} \mathsf{G}_1(\mathcal A, \mathcal B)$.
    \item
    A \emph{Markov strategy} for $\rm{II}$ is a strategy which only considers the most recent move of $\rm{I}$ and the current turn number.
    Formally it is a function $\sigma :\mathcal A \times \omega \to \bigcup \mathcal A$.
    If $\rm{II}$ has a winning Markov strategy, we write
    $\mathrm{II} \underset{\mathrm{mark}}{\uparrow} \mathsf{G}_1(\mathcal A, \mathcal B)$.
    \item
    If there is a single element $A_0 \in \mathcal A$ so that the constant function with value $A_0$ is a winning strategy for $\rm{I}$, we say that $\rm{I}$ has a \emph{constant winning strategy}, denoted by $\mathrm{I} \underset{\mathrm{cnst}}{\uparrow} \mathsf{G}_1(\mathcal A, \mathcal B)$.
    \end{itemize}
    These definitions can be extended to $\mathsf{G}_{\mathrm{fin}}(\mathcal A, \mathcal B)$ in the obvious way.
\end{definition}
Note that, for any selection game $\mathcal G$,
\[
    \mathrm{II} \underset{\mathrm{mark}}{\uparrow} \mathcal G
    \implies \mathrm{II} \uparrow \mathcal G
    \implies \mathrm{I} \not\uparrow \mathcal G
    \implies \mathrm{I} \underset{\mathrm{pre}}{\not\uparrow} \mathcal G
    \implies \mathrm{I} \underset{\mathrm{cnst}}{\not\uparrow} \mathcal G.
\]

Strategies for selection games are connected to selection principles. Notably, $\mathrm{I} \underset{\mathrm{pre}}{\not\uparrow} \mathcal G_1(\mathcal A, \mathcal B)$ if and only if $S_1(\mathcal A, \mathcal B)$ is true. As another example, $\rm{I}$ fails to have a constant winning strategy in the Rothberger game ($G_1(\mathcal O(X), \mathcal O(X))$) if and only if the space is Lindel\"{o}f.

For the Rothberger game, some of the levels of strategies are equivalent. Pawlikowski showed that $\rm{I}$ has a winning strategy in the Rothberger game if and only if $\rm{I}$ has a pre-determined winning strategy \cite{Pawlikowski}. Scheepers (\cite{Scheepers1997}) later extended this result to the $\omega$-Rothberger game, $G_1(\Omega(X),\Omega(X))$.

In \cite{CCH} it is shown that $\rm{II}$ has a winning strategy in the Rothberger game if and only if $\rm{II}$ has a winning strategy in the $\omega$-Rothberger game.

Finally, its straightforward to check that the following are equivalent:
\begin{itemize}
    \item $\rm{II}$ has a winning Markov strategy in the Rothberger game on $X$,
    \item $\rm{II}$ has a winning Markov strategy in the $\omega$-Rothberger game on $X$, and
    \item $X$ is topologically countable (if $X$ is $T_1$ this is the same as saying $X$ is countable; for the full definition of topologically countable, see \cite{CCH}).
\end{itemize}

The following counterexamples distinguish the rest of the properties from each other.
\begin{itemize}
    \item $\rm{II}$ has a winning strategy in the Rothberger game on the Fortissimo space of the reals, but player two does not have a winning Markov strategy, (see the following stack exchange post \cite{CaruvanaStack}).
    \item Neither $\rm{I}$ nor $\rm{II}$ has a winning strategy in the Rothberger game on Luzin subsets of the real line \cite{Reclaw}.
\end{itemize}

\section{A Preservation Theorem for the Rothberger Game}

In this section we prove that strongly proper forcings preserve spaces in which player ${\rm II}$ has a winning strategy in the Rothberger game. We begin by proving an ad hoc density lemma, and then we prove the preservation theorem.

\begin{lemma}\label{lemma:DensityRothberger}
    Let $\theta$ be a large enough regular cardinal, $\ps\in H(\theta)$ a poset, and $(X,\tau)\in H(\theta)$ a space so that $\rm{II}$ has a winning strategy, say $\si$, in the Rothberger game on $(X,\tau)$. Assume that the set of countable $M\prec H(\theta)$ so that $\ps$ is strongly proper for $M$ is stationary in $[H(\theta)]^{\aleph_0}$.
    
    Suppose that $\cal{V}_1,\dots,\cal{V}_k$ are a sequence of open covers of $(X,\tau)$ played by $\rm{I}$ with $W_1,\dots,W_k$ the responses by $\rm{II}$ via $\si$. Finally, let $\,\dot{\cal{U}}\in H(\theta)$ be a $\ps$-name for an open cover of $X$ forced to consist of sets in $\tau$. Then the set of $q\in\ps$ with the following property $(*)$ is dense: $(*)$ asserts of $q$ that there exists a countable $M\prec H(\theta)$ with $\lb \dot{\cal{U}},\ps, (X,\tau)\rb\subset M$ so that 
    \begin{enumerate}
        \item $q$ is a strongly $(M,\ps)$-generic condition and
        \item there exists an $n\geq 1$ so that
    $$
    q\Vdash (\exists U\in\dot{\cal{U}})\;\left[\si\left(\cal{V}_1,\dots,\cal{V}_k,\cal{V}^M_n(\dot{\cal{U}})\right)\seq U \right].
    $$
    \end{enumerate}
\end{lemma}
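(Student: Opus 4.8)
The plan is to prove density directly: fix an arbitrary condition $p_0\in\ps$ and produce a $q\leq p_0$ satisfying property $(*)$. First I would note that since ${\rm II}$ has a winning strategy in the Rothberger game on $(X,\tau)$, the space is in particular Rothberger, hence Lindel\"of; this is exactly what is needed for the approximation machinery of Subsection \ref{subsection:Endow} to be available. Using the stationarity hypothesis, I would then choose a countable $M\prec H(\theta)$ with $\lb p_0,\ps,(X,\tau),\dot{\cal{U}},\si,\cal{V}_1,\dots,\cal{V}_k\rb\subset M$ so that $\ps$ is strongly proper for $M$, and fix an enumeration $\vp_M:\om\to M\cap\ps$ relative to which the approximations $\cal{V}^M_n(\dot{\cal{U}})$ are constructed. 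Since $p_0\in M\cap\ps$, strong properness lets me extend $p_0$ to an $(M,\ps)$-strongly generic condition $q_0$.

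The heart of the argument is to connect an extension of $q_0$ (which lives outside $M$) with a condition inside $M$ supplied by Lemma \ref{lemma:bacon}, using the residue. I would set $\bar{q}_0:=\res_M(q_0)\in M\cap\ps$ and pick $k_0\in\om$ with $\vp_M(k_0)=\bar{q}_0$. Then put $n:=\max(k_0,1)$, so that $n\geq 1$ and $k_0\leq n$. Because $\cal{V}^M_n(\dot{\cal{U}})$ is a genuine open cover of $X$ (computed in $V$ from the parameters in $M$), the value
$$
W:=\si\left(\cal{V}_1,\dots,\cal{V}_k,\cal{V}^M_n(\dot{\cal{U}})\right)
$$
is a well-defined element of $\cal{V}^M_n(\dot{\cal{U}})$. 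Now I would apply Lemma \ref{lemma:bacon} with this $n$, with $E:=W\in\cal{V}^M_n(\dot{\cal{U}})$, and with $k:=k_0\leq n$, obtaining a condition $d\leq_{M\cap\ps}\vp_M(k_0)=\bar{q}_0$ with
$$
d\Vdash(\exists U\in\dot{\cal{U}})\;\left[\check{W}\seq U\right].
$$

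To finish, I would observe that $d\in M$ and $d\leq\bar{q}_0=\res_M(q_0)$, so by the definition of a residue $d$ is compatible with $q_0$; let $q\leq q_0,d$ be a common extension, so that $q\leq q_0\leq p_0$. Since $q\leq q_0$ and strong genericity is plainly downward closed (every $r\leq q$ also extends $q_0$ and hence has a residue), $q$ is again $(M,\ps)$-strongly generic, giving clause (1) of $(*)$. As $q$ extends $d$, it forces $(\exists U\in\dot{\cal{U}})\,[\check{W}\seq U]$, which is exactly clause (2) of $(*)$ for this $M$ and this $n\geq 1$. Since $p_0$ was arbitrary, density follows. The main obstacle to anticipate is the bookkeeping that makes Lemma \ref{lemma:bacon} applicable: one must choose $n$ \emph{after} reading off the index $k_0$ of the residue $\bar{q}_0$, so that the finite antichain $A_x(n)$ built into the approximation is guaranteed to contain a condition compatible with $\vp_M(k_0)$. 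This is what dictates the order ``extend $p_0$ to $q_0$, take its residue, then pick $n$'' rather than fixing $n$ in advance.
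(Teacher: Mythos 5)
Your proposal is correct and follows essentially the same route as the paper's own proof: place the parameters and $p_0$ in a suitable $M$, extend $p_0$ to an $(M,\ps)$-strongly generic $q_0$, read off the index of $\res_M(q_0)$ in the enumeration $\vp_M$, feed the corresponding approximation $\cal{V}^M_n(\dot{\cal{U}})$ to $\si$, apply Lemma \ref{lemma:bacon} to get $d\in M$ below the residue, and amalgamate $d$ with $q_0$. Your choice $n:=\max(k_0,1)$ is in fact a slightly more careful touch than the paper's (which tacitly assumes the index of the residue can serve as the required $n\geq 1$), but this is a cosmetic refinement, not a different argument.
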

\begin{proof}
    Let $p_0$ be a given condition in $\ps$. Using the stationarity assumed for the lemma, let $M\prec H(\theta)$ be countable containing $p_0$, $\dot{\cal{U}}$, $(X,\tau)$, and $\ps$ as elements. Let $\vp_M:\om\to M\cap\ps$ be the enumeration of $M\cap\ps$ relative to which we construct the endowments. Extend $p_0$ to an $(M,\ps)$-strongly generic condition $p$, and let $n\in\om$ so that 
    $$
    \vp_M(n)=\res_M(p).
    $$

    Let $W_{k+1}$ be $\rm{II}$'s reply via $\si$ where $\rm{I}$ plays $\cal{V}_1,\dots,\cal{V}_k,\cal{V}^M_n(\dot{\cal{U}})$. Among other things, $W_{k+1}\in \cal{V}^M_n(\dot{\cal{U}})$. Apply Lemma \ref{lemma:bacon} to find a condition $t\leq _{M\cap\ps}\res_M(p)$ so that
    $$
    t\Vdash\left(\exists U\in\dot{\cal{U}}\right)\left[\check{W}_{k+1}\seq U\right].
    $$
    Since $t\in M$ and $t\leq\res_M(p)$, we can find a condition $q$ extending $p$ and $t$. Then $q$ is $(M,\ps)$-strongly generic since it extends $p$, and $q$ forces the desired statement since it extends $t$.
\end{proof}

Now we are ready for the next preservation theorem.

\begin{theorem}
    Suppose that $\theta$ is a large enough regular cardinal, that $(X,\tau)\in H(\theta)$ is a space for which II has a winning strategy in the Rothberger game on $X$, and that $\ps\in H(\theta)$ is a poset. Suppose further that the set of countable $M\prec H(\theta)$ so that $\ps$ is strongly proper for $M$ is stationary in $[H(\theta)]^{\aleph_0}$. Then $\ps$ forces that II has a winning strategy in the Rothberger game on $(X,\tau)$.
\end{theorem}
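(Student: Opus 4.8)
The plan is to show that in the generic extension, player ${\rm II}$ can win the Rothberger game by playing according to a strategy that ``mirrors'' the ground-model winning strategy $\si$, using the $M$-approximations $\cal{V}^M_n(\dot{\cal{U}})$ as the moves fed to $\si$. The central difficulty is that a strategy in $V[G]$ must respond to covers that are genuinely new (elements of $\tau^G$ that are not in $\tau$), whereas $\si$ only knows how to respond to ground-model covers. The resolution, as in the earlier theorems of this section, is that it suffices to respond to covers consisting of basic (i.e.\ ground-model) open sets, and that the density lemma (Lemma \ref{lemma:DensityRothberger}) lets us repeatedly translate a generic cover $\dot{\cal{U}}$ into a ground-model approximation $\cal{V}^M_n(\dot{\cal{U}})$ to which $\si$ can respond with a set $W$ that is then forced to lie inside some member $U$ of the real cover $\dot{\cal{U}}$.

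First I would fix a condition $p$ and a $\ps$-name $\dot{\tau}^{\mathrm{II}}$ for the strategy that ${\rm II}$ will use in the extension, built by recursion on the round number. The recursion maintains, at stage $k$, a decreasing sequence of conditions and an associated countable model together with the finite sequence $\cal{V}_1,\dots,\cal{V}_k$ of (ground-model) approximations already fed to $\si$ and the responses $W_1,\dots,W_k$ that $\si$ produced. Given player ${\rm I}$'s next move, a name $\dot{\cal{U}}_{k+1}$ for an open cover (which we may assume is forced to consist of sets in $\tau$), I would invoke Lemma \ref{lemma:DensityRothberger} to obtain, densely below the current condition, an extension $q$ and a model $M$ so that $q$ is $(M,\ps)$-strongly generic and so that
$$
q\Vdash (\exists U\in\dot{\cal{U}}_{k+1})\;\left[\si\left(\cal{V}_1,\dots,\cal{V}_k,\cal{V}^M_n(\dot{\cal{U}}_{k+1})\right)\seq U\right]
$$
for some $n\geq 1$. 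Player ${\rm II}$'s response in round $k+1$ is then a name $\dot{U}_{k+1}$ for such a $U$; and I would extend the bookkeeping by appending $\cal{V}^M_n(\dot{\cal{U}}_{k+1})$ to the list of ${\rm I}$-moves against $\si$ and $\si$'s reply $W_{k+1}$ to the list of ${\rm II}$-moves.

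The verification that this yields a winning strategy for ${\rm II}$ in $V[G]$ rests on the key containment $W_{k+1}\seq U_{k+1}$ forced at each stage, together with the fact that $\la W_n:n\in\om\ra$ is a winning play for $\si$ in the ground model. Indeed, since $\si$ is a winning strategy, the sequence $\la W_n : n\in\om\ra$ (as a play of the Rothberger game in $V$) satisfies $\lb W_n:n\in\om\rb\in\mathcal{O}(X)$, i.e.\ it is an open cover of $(X,\tau)$; by Lemma \ref{lemma:StrongBerger} we may even arrange that every $x\in X$ lies in infinitely many $W_n$. Because each $U_{k+1}$ is forced to contain the corresponding $W_{k+1}$, the sets $\lb \dot{U}_n : n\in\om\rb$ are forced to cover every $x\in X$ as well, so ${\rm II}$'s selections form an open cover in $V[G]$ and ${\rm II}$ wins.

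The main obstacle I anticipate is the careful setup of the recursion and the ``fullness''/maximality argument needed to fuse the densely-obtained conditions at each round into a single strategy name that works along every play, rather than merely below one condition. One must be sure that the model $M$ and approximation index $n$ produced by Lemma \ref{lemma:DensityRothberger} at round $k+1$ can be chosen below the condition reached at round $k$, and that the whole construction can be arranged so that the resulting object is genuinely a strategy (a function of ${\rm I}$'s moves so far) forced by $p$ to be winning. This is a bookkeeping-heavy but standard maximality-principle argument; the genuinely new content is entirely contained in the density lemma, which does the work of converting generic covers into ground-model approximations compatible with $\si$.
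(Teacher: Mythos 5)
Your first half matches the paper's approach: simulate a ground-model run against $\si$ by feeding it the approximations $\cal{V}^M_n(\dot{\cal{U}})$, with Lemma \ref{lemma:DensityRothberger} supplying, at each round, a strongly generic condition forcing $\si$'s reply to sit inside some member of the generic cover. But your verification has a genuine gap, and it is precisely the point where the paper works hardest. You assert that $\la W_n:n\in\om\ra$ is ``a play of the Rothberger game in $V$'' and then apply the winningness of $\si$. That is not justified: each $W_k$ and each cover $\cal{V}^{M_k}_{n_k}(\dot{\cal{U}}_k)$ is individually a ground-model object, but the infinite sequence of simulated moves is assembled in $V[G]$ --- it depends on ${\rm I}$'s generic plays and on the models and conditions found by genericity --- so as an $\om$-sequence it will in general lie in $V[G]\setminus V$. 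The statement ``$\si$ is winning'' quantifies over runs lying in $V$ and does not automatically apply to runs existing only in $V[G]$. The paper closes exactly this gap by stepping back into $V$: it fixes names $\la\dot{M}_k,\ddot{\cal{U}}_k,\dot{p}_k,\dot{\ell}(k)\ra$ forced to record the simulation, fixes a condition $p$ and a point $x\in X$, and builds a decreasing sequence $\la r_n:n\in\om\ra$ of conditions deciding these tuples, so that the induced sequence of approximations is an honest run in $V$ against $\si$; only then is the winningness of $\si$ invoked, yielding $B_n\ni x$ with $r_n\Vdash x\in\dot{U}_n$. (One could instead argue that $\si$ remains winning for $V[G]$-runs by absoluteness of well-foundedness of the trees of finite ${\rm I}$-plays whose $\si$-responses avoid a fixed $x$, but some such argument must be supplied, and you supply none.)

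Two further points. First, your plan to build the strategy name along ``a decreasing sequence of conditions,'' extending ``densely below the current condition'' at each round, is not a workable definition of the strategy: an $\om$-length decreasing sequence need not have a lower bound, and a strategy must respond to every play, not along a single branch of conditions. The paper instead defines $\si^*$ working inside $V[G]$ for an arbitrary generic $G$: density of the set in Lemma \ref{lemma:DensityRothberger} guarantees a suitable condition lies in $G$ itself, so the chosen conditions $p_k\in G$ are pairwise compatible but never required to be decreasing; the name $\dot{\si}^*$ then exists by maximality. You flag this as bookkeeping, but it is the difference between a construction that works and one that does not. Second, your appeal to Lemma \ref{lemma:StrongBerger} is misplaced: that lemma strengthens the Rothberger \emph{selection principle}, whereas here the sets $W_n$ are dictated by $\si$, so you cannot ``arrange'' that each point lies in infinitely many of them. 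It is also unnecessary: the index-matching role that Lemma \ref{lemma:StrongBerger} played in Theorem \ref{theorem:PreserveRothberger} is already built into Lemma \ref{lemma:DensityRothberger}, which ties the approximation index $n$ to the position of $\res_M(p)$ in the enumeration of $M\cap\ps$.
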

\begin{proof}
    Fix the space $(X,\tau)$ and a winning strategy $\si$ for player ${\rm II}$ in the Rothberger game on $(X,\tau)$.

    Let $G$ be an arbitrary $V$-generic filter over $\ps$, and in $V[G]$, we will recursively construct a strategy for player ${\rm II}$ in the Rothberger game on $(X,\tau_G)$. This strategy will be denoted $\si^*$.

    The main idea of the proof is this: in $V[G]$, we will simulate a run of the ground model game for $(X,\tau)$ where ${\rm II}$ plays via $\si$. ${\rm I}$'s plays in this game will come from approximations of the form $\cal{V}^{M_n}_{\ell(n)}(\dot{\cal{U}}_n)$ for models $M_n$, and where $\dot{\cal{U}}_n$ is a name for ${\rm I}$'s play in the game for $(X,\tau_G)$. We emphasize here that we will be picking objects from $V$ while working in the extension $V[G]$. To see that this approach succeeds, we will step back to $V$ and work with names for the relevant objects. We will close out the argument by simulating a run of a game in $V$ and catching our tail.

    To begin, suppose that in the Rothberger game on $(X,\tau_G)$, ${\rm I}$ has played $\cal{U}_1,\dots,\cal{U}_m$ and that ${\rm II}$ has responded via $\si^*$ with $U_1,\dots,U_m$. Let ${\rm I}$ now play $\cal{U}_{m+1}$. The situation looks like this:
   $$
   (X,\tau_G)\text{-game}:
   \begin{array}{c|cccccccc}
         {\rm I} & \cal{U}_1 && \dots &&\cal{U}_m &&\cal{U}_{m+1}\\
         \hline
         {\rm II} & & U_1  &&\dots && U_m&
   \end{array}
   $$
We will work to define ${\rm II}$'s reply via $\si^*$, denoted $U_{m+1}$. To do so, we first articulate a recursive hypothesis, and then we will construct $U_{m+1}$ by applying Lemma \ref{lemma:DensityRothberger}.

We suppose as a recursive hypothesis that we have defined a sequence of four-tuples $\la M_k,\dot{\cal{U}}'_k,p_k,\ell(k)\ra_{1\leq k\leq m}$ consisting of elements of $V$ that satisfy six assumptions; we will state the first five of these, make a comment, and then state the sixth. The first five are:
\begin{enumerate}
    \item $M_k$ is a countable elementary submodel of $H^V(\theta)$;
    \item $\dot{\cal{U}}_k\in M_k$ is a $\ps$-name for an open cover of $(X,\tau)$ consisting of elements of $\tau$;
    \item $\dot{\cal{U}}'_k[G]=\cal{U}_k$;
    \item $p_k\in G$ and $p_k$ is an $(M_k,\ps)$-strongly generic condition;
    \item $\res_{M_k}(p_k)$ is the $\ell(k)$-th condition in the enumeration of $M_k\cap\ps$, i.e., $\vp_{M_k}(\ell(k))=\res_{M_k}(p_k)$.
\end{enumerate}
Next, let $W_1,\dots,W_m$ be ${\rm II}$'s replies via $\si$ in the partial run of the game in $V$, where ${\rm I}$ plays $\la\cal{V}^{M_k}_{\ell(k)}(\dot{\cal{U}}'_k):1\leq k\leq m\ra$. So the situation, for the ground model game being played in $V[G]$, looks like this:
$$
   (X,\tau)\text{-game}:
   \begin{array}{c|ccccccc}
         {\rm I} & \cal{V}^{M_1}_{\ell(1)}(\dot{\cal{U}}'_1) && \dots &&\cal{V}^{M_m}_{\ell(m)}(\dot{\cal{U}}'_m)&\\
         \hline
         {\rm II} & & W_1  &&\dots && W_m
   \end{array}
   $$
Our sixth and final recursive assumption is
\begin{enumerate}
    \item[(6)] for each $1\leq k\leq m$,
    $$
    p_k\Vdash^V_\ps\,\left(\exists U\in\dot{\cal{U}}'_k\right)\left[\check{W}_k\seq U\right].
    $$
\end{enumerate}

Now we apply Lemma \ref{lemma:DensityRothberger} to the sequence $\la \cal{V}^{M_k}_{\ell(k)}(\dot{\cal{U}}'_k):1\leq k\leq m\ra$ of ${\rm I}$'s plays and to a name $\dot{\cal{U}}'_{m+1}$ for $\cal{U}_{m+1}$ to find a condition $p_{m+1}\in G$ and a countable elementary submodel $M_{m+1}\prec H^V(\theta)$ (with $M_{m+1}$ in the ground model) so that
\begin{itemize}
    \item $p_{m+1}$ is $(M_{m+1},\ps)$-strongly generic;
    \item $M_{m+1}$ contains $\ps$, $(X,\tau)$, and $\dot{\cal{U}}'_{m+1}$;
    \item $p_{m+1}$ forces over $V$ that $\left(\exists U\in\dot{\cal{U}}'_{m+1}\right)\left[\check{W}_{m+1}\seq U\right]$,
\end{itemize}
where $W_{m+1}$ is ${\rm II}$'s response via $\si$. We may finally let ${\rm II}$'s reply via $\si^*$ in the $(X,\tau_G)$-game be some $U\in\cal{U}_{m+1}$ with $W_{m+1}\seq U$.

To complete the definition of $\si^*$, we pay our debt to the recursion hypothesis by letting the next four-tuple be equal to $\la M_{m+1},\dot{\cal{U}}'_{m+1},p_{m+1},\ell(m+1)\ra$, where $$
\vp_{M_{m+1}}(\ell(m+1))=\res_{M_{m+1}}(p_{m+1}).
$$
This completes the definition of $\si^*$ in an arbitrary $V$-generic extension via $\ps$ and hence completes the definition of the name $\dot{\si}^*$.

Now, working over $V$, we show that $\dot{\si}^*$ is forced to be a winning strategy for ${\rm II}$ in the extension. Towards that end, fix sequences $\la\dot{\cal{U}}_k:k\geq 1\ra$ and $\la\dot{U}_k:k\geq 1\ra$ which are forced to be a run of the game in which ${\rm II}$ plays via $\dot{\si}^*$. By definition of $\dot{\si}^*$ we may also find a sequence $\la\dot{W}_k:k\geq 1\ra$ and a sequence $\la\dot{M}_k,\ddot{\cal{U}}_k,\dot{p}_k,\dot{\ell}(k)\ra_{k\geq 1}$ of $\ps$-names for elements of $V$ forced to satisfy items (1)-(6) above. Among other things, each $\dot{p}_k$ is forced to be in $\dot{G}$ and $\ddot{\cal{U}}_k$ (a name for a name) is forced to be some element $z_k$ of $\dot{M}_k$ so that the interpretation of $z_k$ equals the interpretation of $\dot{\cal{U}}_k$.

We will show that $\ps$ forces that $\lb\dot{U}_k:k\geq 1\rb$ is an open cover and hence that ${\rm II}$ wins. To do so, fix a condition $p\in\ps$ and a point $x\in X$, and we will show that some extension of $p$ forces that $x$ is in $\bigcup\lb\dot{U}_k:k\geq 1\rb$.

To find the desired extension of $p$, we create a run of the Rothberger game in $V$ where ${\rm II}$ plays according to $\si$. This will involve constructing an $\om$-length decreasing sequence $\la r_n:n\in\om\ra$ of conditions starting from $p$, each of which reveals more about the sequence of names.

More precisely, define $\la r_n:n\in\om\ra$ with $r_0=p$ so that
\begin{enumerate}
    \item[(a)] for all $n\geq 1$, $r_n$ decides the value of $\la\dot{M}_n,\ddot{\cal{U}}_n,\dot{p}_n,\dot{\ell}(n)\ra$, say as 
    $$
    \la M_n,\dot{\cal{U}}'_n,p_n,\ell(n)\ra;
    $$
    \item[(b)] $r_n\leq p_n$ (this can be done since $r_n\Vdash\check{p}_n=\dot{p}_n\in\dot{G}$);
    \item[(c)] $r_n\Vdash\dot{\cal{U}}'_n=\dot{\cal{U}}_n$.
\end{enumerate}
Since $r_n$ forces that the value of the tuple $\la\dot{M}_n,\ddot{\cal{U}}_n,\dot{p}_n,\dot{\ell}(n)\ra$ is equal to 
$$
\la M_n,\dot{\cal{U}}'_n,p_n,\ell(n)\ra,
$$
we know that $\dot{\cal{U}}'_n\in M_n$. Hence $\cal{V}^{M_n}_{\ell(n)}(\dot{\cal{U}}'_n)$ is defined and is an open cover of $(X,\tau)$. Then $\la\cal{V}^{M_n}_{\ell(n)}(\dot{\cal{U}}'_n):n\geq 1\ra$ is a valid sequence of moves for ${\rm I}$ in the Rothberger game on $(X,\tau)$ in the ground model. Let $\la B_n:n\geq 1\ra$ be ${\rm II}$'s replies via $\si$.

Since $\si$ is a winning strategy for player ${\rm II}$, we may find an $n\geq 1$ so that $x\in B_n$. We claim that $r_n\Vdash x\in\dot{U}_n$, which will finish the proof. Note that since the sequence of $r_i$'s is decreasing, $r_n$ decides the values of the previous tuples $\la\dot{M}_k,\ddot{\cal{U}}_k,\dot{p}_k,\dot{\ell}(k)\ra_{1\leq k<n}$ the same way that the previous $r_k$ do. Hence $r_n$ forces that 
$$
\dot{W}_n=\si\left(\cal{V}^{M_1}_{\ell(1)}(\dot{\cal{U}}'_1),\dots, \cal{V}^{M_n}_{\ell(n)}(\dot{\cal{U}}'_n)\right).
$$
Therefore $r_n\Vdash\dot{W}_n=\check{B}_n$. Since $r_n\leq p_n$, $r_n$ also forces that there is some $U\in\dot{\cal{U}}'_n$ so that $\dot{W}_n\seq U$. But then, as $r_n$ forces $\dot{\cal{U}}'_n=\dot{\cal{U}}_n$ and forces that $\dot{W}_n=\check{B}_n$, we conclude that $r_n$ forces that there is some $U\in\dot{\cal{U}}_n$ with $\check{B}_n\seq U$. Since $x\in B_n$, this completes the proof.
\end{proof}

We wrap up this section with some remarks on the preservation of Markov strategies for $\rm{II}$ in the Rothberger game. Recall that if a space $X$ is T1, then $\rm{II}$ has a Markov strategy in the Rothberger game if and only if $X$ is countable. From this we can already conclude that all forcing preserves Markov strategies for $\rm{II}$ in the Rothberger game for T1 spaces. As strongly proper forcings preserve $\om_1$, and hence cannot collapse uncountable objects to become countable, we also know that strongly proper forcings preserve spaces in which $\rm{II}$ does \emph{not} have a Markov strategy in the Rothberger game on T1 spaces. 

Even when the space is not T1, strongly proper forcings nonetheless preserve that II has a Markov strategy in the Rothberger game. The relevant proofs are almost identical to the proofs of the previous two results.

\begin{lemma}\label{LemmaDensityMarkovRothberger}
        Let $\theta$ be a large enough regular cardinal, $\ps\in H(\theta)$ a poset, and $(X,\tau)\in H(\theta)$ a space so that $\rm{II}$ has a winning Markov strategy, say $\si$, in the Rothberger game. 
        
        Suppose that $\cal{V}$ is an open cover of $(X,\tau)$ played by $\rm{I}$ at round $k$ of the game, and let $\dot{\cal{U}}\in H(\theta)$ be a $\ps$-name for an open cover of $X$ forced to consist of sets in $\tau$. Then the set of $q\in\ps$ with the following property $(*)$ is dense: $(*)$ asserts of $q$ that there exists a countable $M\prec H(\theta)$ with $\lb \dot{\cal{U}},\ps, (X,\tau)\rb\subset M$ so that 
    \begin{enumerate}
        \item $q$ is a strongly $(M,\ps)$-generic condition and
        \item there exists an $n\geq 1$ so that
    $$
    q\Vdash (\exists U\in\dot{\cal{U}})\;\left[\si\left(\cal{V},k\right)\seq U \right].
    $$
    \end{enumerate}
\end{lemma}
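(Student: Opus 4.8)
The plan is to mirror the proof of Lemma~\ref{lemma:DensityRothberger} almost verbatim, exploiting the one structural simplification afforded by the Markov hypothesis: since $\si$ is a Markov strategy, ${\rm II}$'s reply depends only on ${\rm I}$'s most recent move together with the round number $k$, so we need not track any of the earlier moves $\cal{V}_1,\dots,\cal{V}_{k-1}$ that appear in that earlier lemma. In the application, ${\rm I}$'s round-$k$ cover $\cal{V}$ will be instantiated as one of our approximations.

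First I would fix an arbitrary $p_0\in\ps$ and aim to produce $q\leq p_0$ witnessing $(*)$. Using the stationarity hypothesis, choose a countable $M\prec H(\theta)$ with $\{\dot{\cal{U}},\ps,(X,\tau),p_0\}\subset M$, fix the enumeration $\vp_M:\om\to M\cap\ps$ relative to which the endowments and approximations are built, and extend $p_0$ to an $(M,\ps)$-strongly generic condition $p$. Let $n$ be chosen so that $\vp_M(n)=\res_M(p)$. Since ${\rm II}$ has a winning strategy in the Rothberger game, $(X,\tau)$ is Rothberger and hence Lindel\"{o}f, so by Remark~\ref{remark:ModelLindelof} the approximation $\cal{V}^M_n(\dot{\cal{U}})$ is defined and is an open cover of $(X,\tau)$.

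Next, taking ${\rm I}$'s round-$k$ move to be the approximation $\cal{V}^M_n(\dot{\cal{U}})$, let $W:=\si(\cal{V}^M_n(\dot{\cal{U}}),k)$ be ${\rm II}$'s Markov reply; in particular $W\in\cal{V}^M_n(\dot{\cal{U}})$. I would then apply Lemma~\ref{lemma:bacon} with $E=W$: since $W\in\cal{V}^M_n(\dot{\cal{U}})$ and $n\leq n$, it yields a condition $t\leq_{M\cap\ps}\vp_M(n)=\res_M(p)$ with
$$
t\Vdash(\exists U\in\dot{\cal{U}})\,[\check{W}\seq U].
$$
Because $t\in M$ and $t\leq\res_M(p)$, and $\res_M(p)$ is a residue of $p$ to $M$, the conditions $t$ and $p$ are compatible; let $q$ extend both. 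Then $q$ is $(M,\ps)$-strongly generic since it extends $p$, and $q$ forces the displayed statement since it extends $t$, so $q$ (together with $M$ and $n$) witnesses $(*)$; as $q\leq p_0$, this establishes density.

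I expect no serious obstacle here, since the real work has already been isolated in Lemma~\ref{lemma:bacon} and the residue machinery. The only two points requiring care are (i) verifying that the hypothesis on $\si$ forces $(X,\tau)$ to be Lindel\"{o}f, so that the approximations $\cal{V}^M_n(\dot{\cal{U}})$ are even defined, and (ii) matching indices correctly when invoking Lemma~\ref{lemma:bacon}, using that $\res_M(p)=\vp_M(n)$ and $W\in\cal{V}^M_n(\dot{\cal{U}})$ so that the lemma applies at level $n$. The Markov property is precisely what lets us dispense with the partial-play bookkeeping present in the proof of Lemma~\ref{lemma:DensityRothberger}.
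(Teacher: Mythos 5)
Your proof is correct and takes exactly the approach the paper intends: the paper gives no separate proof of this lemma, remarking only that it is ``almost identical'' to that of Lemma~\ref{lemma:DensityRothberger}, and your argument is precisely that adaptation, with the Markov reply $\si\left(\cal{V}^M_n(\dot{\cal{U}}),k\right)$ replacing the full-history reply and Lemma~\ref{lemma:bacon} plus the residue $\res_M(p)=\vp_M(n)$ doing the same work. You also correctly repaired two evident misprints in the statement --- reading the displayed formula as $\si\left(\cal{V}^M_n(\dot{\cal{U}}),k\right)\seq U$ (so that the quantifier over $n$ is not vacuous) and supplying the missing stationarity hypothesis --- without which the lemma as literally printed would not be provable.
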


\begin{theorem}
    Suppose that $\theta$ is a large enough regular cardinal, that $(X,\tau)\in H(\theta)$ is so that II has a winning Markov strategy in the Rothberger game on $X$, and that $\ps\in H(\theta)$ is a poset. Suppose further that the set of countable $M\prec H(\theta)$ so that $\ps$ is strongly proper for $M$ is stationary in $[H(\theta)]^{\aleph_0}$. Then $\ps$ forces that II has a winning Markov strategy in the Rothberger game on $(X,\tau)$.
\end{theorem}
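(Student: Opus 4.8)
The plan is to mirror the proof of the preceding theorem (for an arbitrary winning strategy of $\rm{II}$), exploiting the fact that a Markov strategy reads only the most recent move and the round number. Fix a winning Markov strategy $\si$ for $\rm{II}$ in the ground-model Rothberger game on $(X,\tau)$. Let $G$ be $V$-generic over $\ps$; working in $V[G]$, I will define a Markov strategy $\si^*$ for $\rm{II}$ in the Rothberger game on $(X,\tau_G)$. As before, the idea is to run, in parallel, a simulated play of the ground-model game in which $\rm{I}$ plays the approximations $\cal{V}^M_n(\dot{\cal{U}})$ and $\rm{II}$ answers via $\si$; the Markov hypothesis on $\si$ is exactly what makes the simulated answer local, so that $\si^*$ can itself be taken to be Markov.

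\emph{Construction of $\si^*$.} Suppose $\rm{I}$ plays an open cover $\cal{U}$ at round $k$ of the $(X,\tau_G)$-game. Fix, by a fixed well-ordering of $V[G]$, a $\ps$-name $\dot{\cal{U}}'_k\in H(\theta)$ for a cover consisting of sets in $\tau$ with $\dot{\cal{U}}'_k[G]=\cal{U}$. By Lemma \ref{LemmaDensityMarkovRothberger}, applied to $\dot{\cal{U}}'_k$ and the round $k$, the set of conditions with property $(*)$ is dense in $V$, so by genericity I may choose $p_k\in G$ witnessing $(*)$: there are a countable $M_k\prec H(\theta)$ containing $\ps,(X,\tau),\dot{\cal{U}}'_k$ and an index $\ell(k)\geq 1$ so that $p_k$ is $(M_k,\ps)$-strongly generic and
$$
p_k\Vdash (\exists U\in\dot{\cal{U}}'_k)\;\left[\si\left(\cal{V}^{M_k}_{\ell(k)}(\dot{\cal{U}}'_k),k\right)\seq U\right].
$$
Let $W_k:=\si(\cal{V}^{M_k}_{\ell(k)}(\dot{\cal{U}}'_k),k)$ be $\rm{II}$'s Markov reply in the simulated ground-model game, and let $\si^*(\cal{U},k)$ be some $U\in\cal{U}$ with $W_k\seq U$ (which exists in $V[G]$ since $p_k\in G$). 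Because $M_k,\ell(k),p_k$, and hence $W_k$ and $U$, were produced from $\cal{U}$ and $k$ alone, the function $\si^*$ depends only on the current cover and the turn number; that is, $\si^*$ is a Markov strategy. In particular there is no need for the recursive hypothesis carrying a history of four-tuples that appeared in the preceding theorem.

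\emph{Verification.} As before, I pass back to $V$ and argue that $\dot{\si}^*$ is forced to be winning. Fix names $\la\dot{\cal{U}}_k:k\geq 1\ra$ and $\la\dot{U}_k:k\geq 1\ra$ for a run in which $\rm{II}$ plays via $\dot{\si}^*$, together with $\ps$-names $\la\dot{M}_k,\ddot{\cal{U}}_k,\dot{p}_k,\dot{\ell}(k)\ra$ for the per-round data chosen above. Given a condition $p$ and a point $x\in X$, build a descending sequence $\la r_n:n\geq 1\ra$ below $p$ with $r_n$ deciding the round-$n$ data as $\la M_n,\dot{\cal{U}}'_n,p_n,\ell(n)\ra$, with $r_n\leq p_n$ and $r_n\Vdash\dot{\cal{U}}'_n=\dot{\cal{U}}_n$, exactly as in the preceding theorem. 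Then $\la\cal{V}^{M_n}_{\ell(n)}(\dot{\cal{U}}'_n):n\geq 1\ra$ is a legitimate sequence of moves for $\rm{I}$ in the ground-model game, and setting $B_n:=\si(\cal{V}^{M_n}_{\ell(n)}(\dot{\cal{U}}'_n),n)$ yields a full run in which $\rm{II}$ follows the Markov strategy $\si$. Since $\si$ is winning, $\lb B_n:n\geq 1\rb$ covers $X$, so I may fix $n$ with $x\in B_n$. As $r_n\leq p_n$ forces $(\exists U\in\dot{\cal{U}}'_n)[\check{B}_n\seq U]$, while $r_n\Vdash\dot{\cal{U}}'_n=\dot{\cal{U}}_n$ and $\dot{U}_n$ was chosen to contain $W_n=B_n$, we conclude $r_n\Vdash x\in\check{B}_n\seq\dot{U}_n$, which finishes the argument.

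\emph{The main obstacle.} The one genuinely new point, relative to the preceding theorem, is checking that $\si^*$ is honestly Markov, i.e. that its output is a function of the single pair $(\cal{U},k)$. This is precisely where the Markov hypothesis on $\si$ is used: since $\si$ reads only its most recent input together with the round number, the simulated reply $\si(\cal{V}^{M_k}_{\ell(k)}(\dot{\cal{U}}'_k),k)$, and therefore $\si^*(\cal{U},k)$, can be computed without consulting earlier rounds, provided the auxiliary data $M_k,\ell(k),p_k$ are selected by a fixed rule (e.g. the least witness in a fixed well-ordering of $V[G]$). A secondary bookkeeping point is to keep the round numbers of the real game and the simulated game aligned, so that $\la B_n\ra$ really is a Markov run of $\si$; this is automatic as long as the round-$k$ reply always feeds the turn number $k$ into $\si$.
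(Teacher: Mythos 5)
Your proposal is correct and takes essentially the approach the paper intends: the paper offers no separate proof of this theorem, saying only that it is almost identical to the preceding two results, and your write-up is exactly that adaptation --- invoking Lemma \ref{LemmaDensityMarkovRothberger} round-by-round, and correctly observing that the recursive four-tuple bookkeeping can be dropped because a Markov $\si$ reads only the current cover and round number, which is what makes $\si^*$ itself Markov. The only point worth flagging is that you have (sensibly) read the conclusion of the Markov density lemma as bounding $\si\left(\cal{V}^{M}_{n}(\dot{\cal{U}}),k\right)$ rather than the literal $\si\left(\cal{V},k\right)$ printed in the paper, which is evidently a typo, and your reading is the one the argument requires.
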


We conclude with a remark about the preservation of winning strategies for II in the $\omega$-Rothberger game. Recall that II has a winning strategy in the $\omega$-Rothberger game if and only if they have a winning strategy in the Rothberger game. The same is true for Markov strategies. We thus obtain the following corollary.

\begin{corollary}
    Suppose that $\theta$ is a large enough regular cardinal, that $(X,\tau)\in H(\theta)$ is so that II has a winning strategy (resp. Markov strategy) in the $\omega$-Rothberger game on $X$, and that $\ps\in H(\theta)$ is a poset. Suppose further that the set of countable $M\prec H(\theta)$ so that $\ps$ is strongly proper for $M$ is stationary in $[H(\theta)]^{\aleph_0}$. Then $\ps$ forces that II has a winning strategy (resp. Markov strategy) in the $\omega$-Rothberger game on $(X,\tau)$.
\end{corollary}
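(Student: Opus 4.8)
The plan is to derive this directly from the two preservation theorems of this section together with the game equivalences recorded in Section 5, without running any new forcing argument. Recall that \cite{CCH} establishes that for \emph{any} topological space, $\rm{II}$ has a winning strategy in the Rothberger game if and only if $\rm{II}$ has a winning strategy in the $\omega$-Rothberger game, and that (as noted in Section 5) the analogous equivalence holds for Markov strategies. The crucial feature of these equivalences is that they are theorems of $\zfc$ about an arbitrary space, so they apply both to $(X,\tau)$ in $V$ and, just as well, to $(X,\tau^G)$ inside any generic extension $V[G]$.

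For the winning-strategy case I would argue along a three-step chain. By hypothesis, $\rm{II}$ has a winning strategy in the $\omega$-Rothberger game on $(X,\tau)$ in $V$; by the \cite{CCH} equivalence, $\rm{II}$ therefore has a winning strategy in the ordinary Rothberger game on $(X,\tau)$. Applying the preservation theorem for winning strategies in the Rothberger game (proved above) yields that $\ps$ forces $\rm{II}$ to have a winning strategy in the ordinary Rothberger game on $(X,\tau^{\dot G})$. Finally, passing to an arbitrary extension $V[G]$ and invoking the \cite{CCH} equivalence once more---now applied to the space $(X,\tau^G)$ as interpreted in $V[G]$---gives that $\rm{II}$ has a winning strategy in the $\omega$-Rothberger game there. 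Since $G$ was arbitrary, this is exactly the desired conclusion.

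The Markov case is handled by the identical chain, substituting the Markov-strategy equivalence from Section 5 for the \cite{CCH} equivalence and the preservation theorem for Markov strategies in the Rothberger game for its winning-strategy counterpart.

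The only point requiring care---and the closest thing to an obstacle---is the legitimacy of reapplying the game equivalence inside the extension. This is not a genuine difficulty: the equivalences are provable in $\zfc$ for every topological space, so they hold for $(X,\tau^G)$ in $V[G]$ by absoluteness of provability, exactly as the finite-powers characterization was reused in the proof of Corollary \ref{OmegaLindelof}. In particular, no hypothesis specific to $V$ (such as the stationarity assumption) is needed at this last step, since all the forcing-theoretic content has already been discharged inside the two preservation theorems being invoked.
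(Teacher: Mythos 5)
Your proposal is correct and matches the paper's own justification: the paper derives this corollary exactly as you do, by combining the two preceding preservation theorems with the equivalence (from \cite{CCH}, recorded in Section 5) between winning (resp.\ Markov) strategies in the Rothberger and $\omega$-Rothberger games, applied once in $V$ and once in the extension. Your explicit remark that the equivalence is a $\zfc$ theorem about arbitrary spaces, and hence applies to $(X,\tau^G)$ in $V[G]$, is precisely the point the paper leaves implicit.
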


\section{Open Questions}

We have shown that strongly proper forcing preserves countable tightness and the Menger property, two selection principles, but in contrast to our results about the Rothberger game, we do not yet have a proof that strongly proper forcing preserves strategic information in the countable fan tightness or Menger games.

\begin{question}
    Suppose that $\ps$ is strongly proper for stationarily many models and that ${\rm II}$ has a winning strategy in the strong countable fan tightness game on a space $(X,\tau)$. Does $\ps$ preserve this?
\end{question}

\begin{question}
    Suppose that $\ps$ is strongly proper for stationarily many models and that ${\rm II}$ has a winning strategy in the Menger game on a space $(X,\tau)$. Does $\ps$ preserve this?
\end{question}

Even answering the following more limited question about the Menger game would be interesting.

\begin{question}
    Suppose that ${\rm II}$ has a winning strategy in the Menger game on a space $(X,\tau)$. Does Cohen forcing preserve this?
\end{question}

In the situation that the questions regarding the countable fan tightness game prove intractable, there is an easier question we can pose that relies on the connection between $\omega$-covers on $X$ and countable fan tightness for $C_p(X)$.

\begin{question}
    Suppose that $\ps$ is strongly proper for stationarily many models, $X$ is a topological space, and that ${\rm II}$ has a winning strategy in the strong countable fan tightness game on $(C_p(X),\tau)$. Does $\ps$ preserve this?
\end{question}

It is also common to study covering properties involving $k$-covers: $\mathcal U$ is a $k$-cover of $X$ if for every compact $K \subseteq X$ there is a $U \in \mathcal U$ so that $K \subseteq U$. The set of $k$-covers of $X$ is denoted $\mathcal K(X)$, and from this we obtain the $k$-Rothberger/Menger game $\mathsf{G}_\square(\mathcal A, \mathcal B)$. It is frequently the case that statements which are true for $\omega$-covers have analogous true statements for $k$-covers, so we ask the following question.

\begin{question}
    Suppose that $\ps$ is strongly proper for stationarily many models and that ${\rm II}$ has a winning strategy in the $k$-Rothberger/Menger game on a space $(X,\tau)$. Does $\ps$ preserve this?
\end{question}

\section*{Acknowledgements} We would like to thank Pedro Marun for providing a number of very helpful comments and corrections.

\end{document}